\newcommand{\T}{{\rm T}}
\newcommand{\R}{\mathbb{R}}
\newcommand{\norm}[1]{\left\| #1 \right\|}
\newtheorem{remark}[theorem]{Remark}
\newtheorem{algorithm}{Algorithm}
\newcommand{\eps}{\varepsilon}
\newcommand{\id}{\mathbb{I}}
\title{Compensation for geometric modeling errors by electrode movement in electrical impedance tomography}
\author{N. Hyv\"onen\footnotemark[2]
\and H. Majander\footnotemark[3]
\and S. Staboulis\footnotemark[4]
}
\begin{document}
\maketitle

\renewcommand{\thefootnote}{\fnsymbol{footnote}}

\footnotetext[2]{Aalto University, Department of Mathematics and Systems Analysis, P.O. Box 11100, FI-00076 Aalto, Finland (nuutti.hyvonen@aalto.fi). The work of NH was supported by the Academy of Finland (decision 267789).}

\footnotetext[3]{Aalto University, Department of Mathematics and Systems Analysis, P.O. Box 11100, FI-00076 Aalto, Finland and Ecole
Polytechnique, Centre de Math\'ematiques Appliqu\'es, Route de Saclay,
91128 Palaiseau Cedex, France (helle.majander@aalto.fi). The work of HM was supported by the Foundation for Aalto University Science and Technology and by the Academy of Finland (decision 267789).}

\footnotetext[4]{Technical University of Denmark, Department of Applied Mathematics and Computer Science, Asmussens Alle, Building 322, DK-2800, Kgs. Lyngby, Denmark (ssta@dtu.dk).}

\begin{abstract}
Electrical impedance tomography aims at reconstructing the conductivity inside a physical body from boundary measurements of current and voltage at a finite number of contact electrodes. In many practical applications, the shape of the imaged object is subject to considerable uncertainties that render reconstructing the internal conductivity impossible if they are not taken into account. This work numerically demonstrates that one can compensate for inaccurate modeling of the object boundary in two spatial dimensions by estimating the locations and sizes of the electrodes as a part of a reconstruction algorithm. The numerical studies, which are based on both simulated and experimental data, are complemented by proving that the employed {\em complete electrode model} is approximately conformally invariant, which suggests that the obtained reconstructions in mismodeled domains reflect conformal images of the true targets. The numerical experiments also confirm that a similar approach does not, in general, lead to a functional algorithm in three dimensions.
\end{abstract}

\renewcommand{\thefootnote}{\arabic{footnote}}

\begin{keywords}
Electrical impedance tomography, geometric modeling errors, electrode movement, inaccurate measurement model, complete electrode model, conformal invariance
\end{keywords}

\begin{AMS}
65N21, 35R30
\end{AMS}

\pagestyle{myheadings}
\thispagestyle{plain}
\markboth{N. HYV\"ONEN, H. MAJANDER, AND S. STABOULIS}{ELECTRODE MOVEMENT AND MODELING ERRORS IN EIT}

\section{Introduction}
\label{sec:introduction}
{\em Electrical impedance tomography} (EIT) aims at reconstructing the conductivity (or admittivity) inside a physical body from boundary measurements of current and voltage at a finite number of contact electrodes. The most accurate way to model the function of an EIT device is employing the {\em complete electrode model} (CEM), which takes into account the electrode shapes and the contact resistances (or impedances) caused by resistive layers at the electrode-object interfaces~\cite{Cheng89,Somersalo92}. For information on potential applications of EIT, we refer to the review articles~\cite{Borcea02,Cheney99,Uhlmann09} and the references therein. 

In a real-world setting for EIT, the conductivity is almost never the only unknown: the information on the contact resistances, the positioning of the electrodes and the shape of the imaged object is typically incomplete as well. As an example, when imaging (a part of) a human body, the domain shape and the contact resistances obviously depend on the examined patient and the localization of the electrodes is prone to suffer from considerable inaccuracies. As observed already in~\cite{Barber88, Breckon88, Kolehmainen97}, even slight mismodeling of the measurement setting typically ruins the reconstruction of the conductivity, and so it is essential to develop algorithms that are robust with respect to (geometric) modeling errors.

The most straightforward way to cope with unknown contact resistances, electrode locations and boundary shape in EIT is arguably to include their estimation as a part of a (Bayesian) output least squares reconstruction algorithm. The fundamental requirement for this approach is the ability to compute/approximate the (Fr\'echet) derivatives of the electrode measurements with respect to the corresponding model parameters, which has been established in~\cite{Darde12,Darde13a,Vilhunen02}. In~\cite{Darde13a,Darde13b}, this approach of simultaneous estimation of the conductivity and the geometric parameters was successfully tested with both simulated and experimental data; the computations in \cite{Darde13a,Darde13b} were performed on three-dimensional {\em finite element} (FE) meshes, although the considered measurement settings were essentially two-dimensional,~i.e.,~homogeneous along one of the coordinate axes. However, the algorithm introduced in~\cite{Darde13a,Darde13b} carries an obvious weakness: the computation of the needed shape derivatives with respect to the object boundary suffers from numerical instability that requires the use of relatively dense FE meshes and/or artificially high contact resistances in order to regularize the CEM forward problem (cf.~\cite{Darde16}). On the other hand, the computation of the Fr\'echet derivatives with respect to the electrode positions and shapes does not suffer from as severe instability.

This work demonstrates that in two spatial dimensions one can compensate for a mismodeled object shape by including merely the estimation of the electrode locations and sizes in an output least squares reconstruction algorithm of EIT, thus circumventing the issues with the stability of shape derivatives documented in~\cite{Darde13a,Darde13b}. To be more precise, the reconstruction is formed in a simple but inaccurate model domain, but in addition to the conductivity and the contact resistances also the electrode positions and sizes are estimated by the reconstruction algorithm; see~\cite{Winkler14} for a preliminary numerical example. We justify this approach theoretically by
proving that the CEM is in a certain sense approximately conformally invariant, which suggests that the reconstruction in the model domain approximates a conformal image of the true target. (In fact, allowing spatially varying contact resistances would make the CEM fully conformally invariant~\cite{Rieder14}, but such a setting cannot be considered practical from the standpoint of reconstruction algorithms.) Unfortunately, the availability of a large family of conformal mappings also seems to be a necessary condition for the full functionality of the introduced approach: according to our numerical experiments, errors in the model for the object boundary cannot, in general, be compensated by allowing electrode movement in three spatial dimensions. For related algorithms, see \cite{Boyle12, Jehl15}.

To complete this introduction, let us present a brief survey of the previous methods for recovering from an unknown exterior boundary shape in EIT. In {\em difference} imaging, electrode measurements are performed at two time instants and the corresponding {\em change} in the conductivity is reconstructed~\cite{Barber84}. The modeling errors partly cancel out when the difference data are formed, allowing a reconstruction without substantial artifacts. However, difference imaging is approximative as its functionality has only been justified via a linearization of the forward model. Even more importantly, difference data are not always available, which is the premise of this work. The first generic algorithm for recovering from an inaccurate boundary shape in {\it absolute} EIT imaging was introduced by~\cite{Kolehmainen05,Kolehmainen07}, where the mismodeled geometry is taken into account by reconstructing a (slightly) anisotropic conductivity. The main weakness of the algorithm of~\cite{Kolehmainen05,Kolehmainen07} is the difficulty in generalizing it to three dimensions. The approximation error method~\cite{Kaipio05} was successfully applied to EIT with an inaccurately known boundary shape in~\cite{Nissinen11,Nissinen11b}: the error originating from the uncertainty in the measurement geometry is represented as a stochastic process whose second order moments are approximated in advance based on the prior probability densities for the conductivity and the boundary shape. A reconstruction of the conductivity is then formed via statistical inversion.

This text is organized as follows. Section~\ref{sec:forward} recalls the CEM and considers its differentiability with respect to different model parameters. In Section~\ref{sec:cinvariance}, we demonstrate that the CEM is approximately conformally invariant in two dimensions. The reconstruction algorithm, which aims at computing a {\em maximum a posteriori} (MAP) estimate for the conductivity and other unknown parameters within the Bayesian paradigm, is introduced in Section~\ref{sec:inverse}. The numerical tests are described in Section~\ref{sec:numerics}; both simulated and experimental data are considered. Finally, Section~\ref{sec:conclusion} lists the concluding remarks.

\section{Forward model and its properties}
\label{sec:forward}

In this section, we first introduce the CEM and subsequently consider its Fr\'echet differentiability with respect to different model parameters.

\subsection{Complete electrode model}
\label{sec:CEM}

In practical EIT, $M \geq 2$ contact electrodes $\{ E_m\}_{m=1}^M$ are attached to the exterior surface of a body $\Omega \subset \R^n$, $n=2$ or $3$, which is assumed to have a connected complement. A net current $ I_m\in\R $ is driven through the corresponding electrode $ E_m $ and the resulting constant electrode potentials $ U = [U_1,\ldots,U_M]^{\rm T} \in \R^M $ are measured. 
As there are no sinks or sources inside the object, any meaningful current pattern $ I = [I_1,\ldots,I_M]^{\rm T} $ belongs to the zero-mean subspace $\R^M_\diamond \subset \R^M$. The contact resistances at the electrode-object interfaces are modeled by $z = [z_1,\dots, z_M]^{\rm T} \in \R_+^M$.

We assume that $\Omega$ is a bounded domain with a smooth boundary. Moreover, the electrodes $\{ E_m\}_{m=1}^M$ are identified with the open, nonempty subsets of $\partial \Omega$ and assumed to be mutually well-separated, i.e., $\overline{E}_k \cap \overline{E}_l = \emptyset$ for $k \not= l$. We denote $E = \cup E_m$ and assume that $\partial E$ is a smooth submanifold of $\partial \Omega$. The mathematical model that most accurately predicts real-life EIT measurements is the CEM \cite{Cheng89}, which is described by an elliptic mixed Neumann--Robin boundary value problem: the electromagnetic potential $u$ and the potentials on the electrodes $U$ satisfy
\begin{equation}
\label{eq:cemeqs}
\begin{array}{ll}
\displaystyle{\nabla \cdot(\sigma\nabla u) = 0 \qquad}  &{\rm in}\;\; \Omega, \\[6pt] 
{\displaystyle{\nu\cdot\sigma\nabla u} = 0 }\qquad &{\rm on}\;\;\partial\Omega\setminus\overline{E},\\[6pt] 
{\displaystyle u+z_m{\nu\cdot\sigma\nabla u} = U_m } \qquad &{\rm on}\;\; E_m, \quad m=1, \dots, M, \\[2pt] 
{\displaystyle \int_{E_m}\nu\cdot\sigma\nabla u\,{\rm d}S} = I_m, \qquad & m=1,\ldots,M, \\[4pt]
\end{array}
\end{equation}
interpreted in the weak sense. Here, $ \nu $ is the exterior unit normal of $ \partial\Omega $ and the symmetric conductivity $\sigma: \Omega \to \R^{n \times n}$ that characterizes the electric properties of the medium is assumed to satisfy
\begin{equation}
\label{eq:sigma}
\varsigma_- \id \leq \sigma \leq \varsigma_+ \id, \qquad \varsigma_-,  \varsigma_+ > 0, 
\end{equation}
almost everywhere in $\Omega$, with the inequalities understood in the sense of positive definiteness and $\id \in \R^{n\times n}$ being the identity matrix.

Given an input current pattern $ I\in \R^M_\diamond $ as well as
the conductivity $\sigma$ and the contact resistances $z$, 
the spatial electric potential $ u\in H^1(\Omega) $ and the electrode potentials $ U\in \R^M $ are uniquely determined by \eqref{eq:cemeqs} up to a common additive constant, i.e., up to the ground level of potential~\cite{Somersalo92}. This solution pair depends continuously on the data in $\mathcal{H}(\Omega) :=  (H^1(\Omega)\oplus \R^M)/\R $, which is here equipped with the {\em electrode-dependent} norm
\begin{equation*}
\norm{(v,V)}_{\mathcal{H}(\Omega)} = \inf_{c\in\R}\Big\{ \norm{v-c}_{H^1(\Omega)}^2 + \sum_{m=1}^M\|V_m-c \|^2_{L^2(E_m)} \Big\}^{1/2}.
\end{equation*}
To be more precise,
\begin{equation}
\label{eq:bound}
\| (u,U) \|_{\mathcal{H}(\Omega)} \leq \frac{C}{\min \{\varsigma_-, z_1^{-1}, \dots, z_M^{-1} \}}  \left( \sum_{m=1}^M |I_m|^2/|E_m| \right)^{1/2},
\end{equation}
where $|E_m|$ is the area/length of $E_m$ and $C = C(\Omega) > 0$ does not depend on $\sigma$, $z$ or the geometry of the electrodes as a subset of $\partial \Omega$ (cf.~\cite[Section~2]{Hyvonen04} and~\cite[(2.4)]{Hanke11b}).  It can be shown that the interior electromagnetic potential $u$ exhibits higher Sobolev regularity of the order $H^{2-\epsilon}(\Omega)$, $\epsilon >0$, if $\sigma$ is Lipschitz continuous in $\overline{\Omega}$, meaning that also
\begin{equation}
\label{eq:regularity}
u|_{\partial\Omega}\in H^{3/2-\epsilon}(\partial\Omega)/ \R, \qquad 
u|_{\partial E} \in H^{1-\epsilon}(\partial E)/ \R  
\end{equation}
due to the trace theorem (cf.~\cite[Remark~1]{Darde12} and \cite{Grisvard85}).

We define the measurement, or current-to-voltage, map of the CEM as
\begin{equation}\label{eq:measmat}
R: I \mapsto U, \quad \R^M_\diamond \to \R^M / \R .
\end{equation} 
Take note that we equip the quotient space $\R^M / \R$ with its {\em natural} norm
$$
\| V \|_{\R^M / \R} \, = \, \inf_{c \in \R} \| V - c \mathbf{1} \|_{\R^M}
$$
where $\mathbf{1} = [1, \dots, 1]^{\rm T} \in \R^M$. 

\subsection{Fr\'echet derivatives}
\label{sec:Frechet}
In this section, we summarize some relevant Fr\'echet differentiability results for the measurement map $R: \R^M_\diamond \to \R^M / \R$ with respect to the model parameters in~\eqref{eq:cemeqs}; for more details, see~\cite{Darde12,Darde13a,Kaipio00,Lechleiter06,Vilhunen02}. We start by perturbing $\partial E$ and introducing the corresponding shape derivative.

The measurement map of \eqref{eq:measmat} can be interpreted as a function of two variables,
\[
R: (I, a) \mapsto U(I,a), \quad \R^M_\diamond \times \mathcal{B}_d
\to \R^M / \R,
\]
where $\mathcal{B}_d \subset [C^1(\partial E)]^n$ is an origin-centered open ball of radius $d>0$. The pair $(u(I,a),U(I,a))$ is the solution of \eqref{eq:cemeqs} when the electrodes $E_m$, $m=1, \dots, M$, are replaced by the perturbed versions defined by the boundaries
\begin{equation}
\label{eq:perturbed}
\partial E_m^a = \big\{ P_x \big(x + a(x) \big) \, \big| \, x \in \partial E_m \big\} \subset \partial \Omega, \qquad m=1,\dots, M,
\end{equation}
where $P_x$ is the projection in the direction of $\nu(x)$ onto $\partial \Omega$. If $d > 0$ is chosen small enough, the above definitions are unambiguous in the sense that $\{E_m^a \}_{m=1}^M$ is a set of feasible electrodes on $\partial \Omega$~\cite{Darde12}; in what follows, we will implicitly assume that this is the case. For the proof of the following theorem, we refer to~\cite{Darde12}.

\begin{theorem}
\label{thm:reunaderivaatta}
Suppose that the conductivity $ \sigma $ belongs to $C^1(\overline{\Omega},\R^{n\times n})$. 
Then $R:  \R^M_\diamond \times \mathcal{B}_d \to \R^M / \R$ is Fr\'echet differentiable with respect to its second variable at the origin.
\end{theorem}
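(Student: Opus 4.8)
The plan is to recast the statement as a differentiable dependence of the solution of a variational problem posed on a \emph{fixed} geometry, and then to invoke the implicit function theorem. First I would write \eqref{eq:cemeqs} weakly: the pair $(u^a, U^a) \in \mathcal{H}(\Omega)$ attached to the perturbed electrodes $\{E_m^a\}$ is the unique element satisfying $B_a((u^a,U^a),(v,V)) = L(v,V)$ for all $(v,V)\in\mathcal{H}(\Omega)$, where $L(v,V) = \sum_{m=1}^M I_m V_m$ and
\begin{equation*}
B_a\big((u,U),(v,V)\big) = \int_\Omega \sigma \nabla u \cdot \nabla v \, \dd x + \sum_{m=1}^M \frac{1}{z_m} \int_{E_m^a} (u - U_m)(v - V_m)\, \dd S
\end{equation*}
carries its whole dependence on $a$ through the \emph{moving} integration domains $E_m^a \subset \partial\Omega$. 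The obstruction to differentiating directly in $a$ is precisely that these domains, and with them the boundary traces attached to the Robin terms, vary with the parameter.

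To remove this, I would transport the problem back to the reference configuration $\{E_m\}$. One constructs a family of diffeomorphisms $G_a : \overline{\Omega} \to \overline{\Omega}$ with $G_0 = \mathrm{id}$ such that $a \mapsto G_a$ is smooth (in fact affine) from $\mathcal{B}_d$ into $C^1(\overline{\Omega}, \R^n)$ and $G_a|_{\partial\Omega}$ carries each $E_m$ onto $E_m^a$ in accordance with \eqref{eq:perturbed}; this is obtained by extending the boundary field $a$ into $\overline{\Omega}$ and incorporating the projection $P_x$. The change of variables $\hat u^a = u^a \circ G_a$ then yields an equivalent problem on the fixed geometry,
\begin{equation*}
\int_\Omega \hat\sigma_a \nabla \hat u^a \cdot \nabla v \, \dd x + \sum_{m=1}^M \frac{1}{z_m} \int_{E_m} (\hat u^a - U^a_m)(v - V_m)\, J_a \, \dd S = L(v,V),
\end{equation*}
with $\hat\sigma_a = |\det DG_a|^{-1} (DG_a)\,(\sigma \circ G_a)\,(DG_a)^{\T}$ the pushed-forward conductivity and $J_a$ the surface Jacobian of $G_a|_{\partial\Omega}$. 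Crucially the electrode potentials are untouched by the pullback, so establishing differentiability of $a \mapsto (\hat u^a, U^a)$ immediately gives the same for $R(I, a) = U^a$.

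Next I would show that the transported form $\hat B_a$, defined by the left-hand side above, depends differentiably on $a$. Since $a\mapsto G_a$ is smooth into $C^1$, the maps $a \mapsto \hat\sigma_a \in C^0(\overline{\Omega}, \R^{n\times n})$ and $a \mapsto J_a \in C^0(\partial E)$ are Fr\'echet differentiable; this is exactly where the hypothesis $\sigma \in C^1(\overline{\Omega},\R^{n\times n})$ enters, as differentiating $\sigma \circ G_a$ requires the gradient of $\sigma$. Hence $a \mapsto \hat B_a$ is differentiable into the space of bounded bilinear forms on $\mathcal{H}(\Omega)$. Moreover, for $d$ sufficiently small the matrices $\hat\sigma_a$ stay uniformly elliptic in the sense of \eqref{eq:sigma} and $J_a$ remains bounded away from zero, so each $\hat B_a$ is uniformly coercive on $\mathcal{H}(\Omega)$.

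Finally I would apply the implicit function theorem to $\mathcal{F}(a, (\hat u, U)) = \hat B_a((\hat u, U), \cdot\,) - L$, regarded as a map $\mathcal{B}_d \times \mathcal{H}(\Omega) \to \mathcal{H}(\Omega)'$. This $\mathcal{F}$ is differentiable in $(\hat u, U)$ (it is affine there) and in $a$ (by the preceding paragraph), and its partial differential in $(\hat u, U)$ at the origin is the isomorphism induced by the coercive unperturbed form $B_0$, whose bounded invertibility is guaranteed by the well-posedness estimate \eqref{eq:bound}. The implicit function theorem then yields Fr\'echet differentiability of $a \mapsto (\hat u^a, U^a)$ near $a=0$, hence of $a \mapsto R(I, a) = U^a$ into $\R^M/\R$ at the origin, as claimed. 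I expect the genuine work to lie in the second and third paragraphs: producing the family $G_a$ with honestly smooth $C^1$-dependence on $a$, and then matching the regularity so that $\hat\sigma_a$ and the surface Jacobian $J_a$ inherit differentiability in the correct norms --- this is where the interplay between the $C^1$ regularity of $a$, the induced regularity of $G_a$, and the $C^1$ hypothesis on $\sigma$ must be reconciled.
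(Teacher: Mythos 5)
Your proposal is sound, but note first that the paper itself contains no proof of Theorem~\ref{thm:reunaderivaatta}: it defers entirely to \cite{Darde12}, where differentiability is obtained by a direct perturbation analysis of the variational problem on the \emph{moving} electrodes --- roughly, one estimates the discrepancy terms supported on the regions swept by the electrode edges, where the boundary condition changes type, using the $H^{2-\epsilon}(\Omega)$ interior regularity and the trace regularity \eqref{eq:regularity}, and this route yields the Hadamard-type boundary representation \eqref{eq:sampling} as a by-product. Your transport-to-a-fixed-domain (velocity method) argument combined with the implicit function theorem is a genuinely different and in some respects cleaner route: it avoids the delicate estimates near the moving edges, it gives smooth rather than merely differentiable dependence on $a$, and it correctly isolates where the hypothesis $\sigma \in C^1(\overline{\Omega},\R^{n\times n})$ enters (differentiating $a \mapsto \sigma \circ G_a$). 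What it does not deliver for free is \eqref{eq:sampling}: the implicit function theorem produces the derivative in ``material'' form, as the solution of a variational problem with distributed right-hand side, and converting this into the boundary integral over $\partial E_m$ requires an additional integration by parts in which the regularity \eqref{eq:regularity} --- invisible in your argument --- re-enters; since the theorem asserts only differentiability, this is not a gap. Three repairable slips: the pulled-back conductivity should be $\hat{\sigma}_a = |\det DG_a|\,(DG_a)^{-1}(\sigma\circ G_a)(DG_a)^{-\T}$, i.e.\ your Jacobians are inverted, so the form you wrote is not the transport of $B_a$; the map $a\mapsto G_a$ cannot be affine because of the nonlinear projection $P_x$ in \eqref{eq:perturbed} (it is, however, a smooth superposition operator on $C^1$, which suffices for the IFT); and the surface Jacobian $J_a$ lives on $E$, not on $\partial E$, while the invertibility of the linearization at $a=0$ rests on the Lax--Milgram coercivity of the CEM form on the quotient space $\mathcal{H}(\Omega)$ --- the continuity estimate \eqref{eq:bound} alone does not make it an isomorphism. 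With these corrections, and with the uniform ellipticity of $\hat{\sigma}_a$ and uniform positivity of $J_a$ for small $\norm{a}_{C^1}$ checked as you indicate, your proof goes through.
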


Recall that this means there exists a (bi)linear and bounded map $U'(I,0)$ from $[C^1(\partial E)]^n$ to $\R^M/\R$ such that 
\begin{equation}\label{eq:diffquo}
\lim_{0\not=a\to 0} \frac{1}{\norm{a}_{C^1}}\| U(I,a) - U(I,0) - U'(I,0)a \|_{\R^M / \R} = 0, \qquad a\in [C^1(\partial E)]^n,
\end{equation}
for any $I \in \R^M_\diamond$. Moreover, if $I, \hat{I}\in\R^M_\diamond $ are electrode current patterns and the pairs $(u,U), (\hat{u},\hat{U})$ are the respective solutions of \eqref{eq:cemeqs}, then $U'(I,0)a $ can be sampled via the relation~\cite{Darde12}
\begin{equation}
\label{eq:sampling}
U'(I,0)a\cdot \hat{I} = 
- \sum_{m=1}^M \frac{1}{z_m}\int_{\partial E_m}(a \cdot \nu_{\partial E_m})(U_m-u)
(\hat{U}_m-\hat{u}) \,{\rm d}s,
\end{equation}
where $\nu_{\partial E_m}$ is the exterior unit normal of $\partial E_m$ lying in the tangent bundle of $\partial\Omega$. Observe also that the integrals on the right hand side of \eqref{eq:sampling} are well-defined due to~\eqref{eq:regularity}, and they reduce to pointwise evaluations when $n=2$.

Obviously, the measurement map $R$ can also be treated as a function of four variables by writing
\[
R: 
\left\{
\begin{array}{l}
(I, \sigma, z, a) \mapsto U(I,\sigma, z, a), \\[2mm]
\mathcal{D} := \R^M_\diamond \times \Sigma \times \R_+^M \times \mathcal{B}_d \to \R^M / \R,
\end{array}
\right.
\]
where
$$
\Sigma = \big\{
\sigma \in C^1\big(\overline{\Omega}, \R^{n \times n}\big) \ \big| \ \sigma = \sigma^{\rm T} \textrm{ and satisfies } \eqref{eq:sigma} \textrm{ for some } \varsigma_-, \varsigma_+ > 0
\big\}
$$ 
is a set of plausible conductivities. The differentiability of $ U =  U(I,\sigma, z, a)$ with respect to its second argument is known even for considerably less regular conductivities (see, e.g., \cite{Kaipio00,Lechleiter06}), and that with respect to the contact resistances is straightforward to establish and has been utilized in many numerical algorithms (cf., e.g., \cite{Vilhunen02}). We collect the needed differentiability results in the following corollary.  
\begin{corollary}
\label{corollary}
Under the above assumptions, the measurement map of the CEM, 
\[ 
R:   \mathcal{D} \to \R^M/\R,
\]
is Fr\'echet differentiable in the set $\R^M_\diamond\times \Sigma \times \R_+^M \times\{0\} \subset \mathcal{D} $.
\end{corollary}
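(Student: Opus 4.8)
The plan is to deduce the joint Fr\'echet differentiability from the four separate (partial) differentiability results by invoking the classical calculus principle that a map whose partial derivatives exist in a neighborhood and are continuous is totally differentiable, in the slightly relaxed form that permits one of the partials to exist merely at the base point. Write $\mathcal{X} = \R^M_\diamond \times C^1(\overline{\Omega}, \R^{n \times n}) \times \R^M \times [C^1(\partial E)]^n$ for the ambient Banach space, so that $\mathcal{D}$ is an open subset of $\mathcal{X}$ and every point $p = (I_0, \sigma_0, z_0, 0)$ of $\R^M_\diamond \times \Sigma \times \R_+^M \times \{0\}$ is interior to $\mathcal{D}$, because $\Sigma$ and $\R_+^M$ are defined by open conditions and $0$ lies in the open ball $\mathcal{B}_d$. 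It therefore suffices to establish Fr\'echet differentiability at each such $p$ with respect to perturbations ranging over all of $\mathcal{X}$.

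First I would treat the three ``easy'' variables. Since $R$ is linear in $I$, its partial derivative $\partial_I R(I,\sigma,z,a)$ exists at every point of $\mathcal{D}$ and equals the bounded linear map $\delta I \mapsto R(\delta I,\sigma,z,a)$; as this is a finite matrix from $\R^M_\diamond$ to $\R^M/\R$, its continuous dependence on $(\sigma,z,a)$ is nothing but continuous dependence of the CEM solution pair on these parameters, guaranteed by the stability estimate~\eqref{eq:bound} and a routine perturbation argument. The partial derivative $\partial_\sigma R$ exists throughout $\mathcal{D}$: for any fixed feasible electrode configuration $\{E_m^a\}$ the differentiability of $R$ in $\sigma$ is the known result of~\cite{Kaipio00,Lechleiter06}, applied on the perturbed geometry. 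The partial derivative $\partial_z R$ likewise exists throughout $\mathcal{D}$ and is elementary to write down, since $z$ enters~\eqref{eq:cemeqs} only through the Robin coefficients on the electrodes. Finally, Theorem~\ref{thm:reunaderivaatta} supplies the remaining partial derivative $\partial_a R(I_0,\sigma_0,z_0,0)$ at the base point, with the sampling representation~\eqref{eq:sampling}.

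With existence in hand, the crux is the continuity of $\partial_I R$, $\partial_\sigma R$ and $\partial_z R$ at $p$, jointly in all four variables. Continuity in $(I,\sigma,z)$ for a fixed electrode geometry is standard; the delicate point, and the one I expect to be the main obstacle, is continuity in the shape parameter $a$ as $a \to 0$, because $a$ deforms the electrodes and hence the very configuration over which the Robin conditions and the boundary integrals in~\eqref{eq:sampling} are posed. I would handle this by pulling the perturbed problem back to the reference configuration through the $a$-induced boundary diffeomorphism that carries the reference electrode edges $\partial E$ onto the perturbed ones $\partial E^a$ (extending $x \mapsto P_x(x+a(x))$ from $\partial E$ to $\partial \Omega$), which converts the $a$-dependence of the electrodes into a dependence of the transformed coefficients and surface measures on a fixed geometry that converges to the identity in $C^1$ as $a \to 0$. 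The uniform bound~\eqref{eq:bound}, whose constant is independent of $\sigma$, $z$ and the electrode geometry, then lets me estimate the differences of the solution pairs and of their $\sigma$- and $z$-derivatives by the corresponding perturbations of the pulled-back data, yielding the required continuity.

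Once all partial derivatives are continuous at $p$ (with $\partial_a R$ known at least at $p$ itself), the quoted calculus principle gives that $R$ is Fr\'echet differentiable at $p$ with total derivative $\partial_I R(p)\,\pi_I + \partial_\sigma R(p)\,\pi_\sigma + \partial_z R(p)\,\pi_z + \partial_a R(p)\,\pi_a$, where $\pi_I$, $\pi_\sigma$, $\pi_z$, $\pi_a$ denote the coordinate projections of $\mathcal{X}$. Since $p$ was an arbitrary point of $\R^M_\diamond \times \Sigma \times \R_+^M \times \{0\}$, this establishes differentiability on the whole set and completes the proof.
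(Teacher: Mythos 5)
Your proposal is correct in outline but follows a genuinely different route from the paper: the paper's entire proof is a one-line citation, observing that the statement is a weaker version of \cite[Corollary~2.2]{Darde13b}, where joint Fr\'echet differentiability is established --- there even with respect to perturbations of the outer boundary $\partial \Omega$, not only of the electrodes --- so all analytical work is delegated to that reference. You instead assemble the joint differentiability from the partial results via the classical principle that total differentiability at a point follows when all but one of the partial derivatives exist in a neighborhood and are continuous at the point, the remaining one (here $\partial_a R$, supplied by Theorem~\ref{thm:reunaderivaatta} only at $a=0$) merely existing there; this is a legitimate and more self-contained decomposition, and you correctly identify that the whole burden then falls on the joint continuity of $\partial_I R$, $\partial_\sigma R$, $\partial_z R$ as the electrode configuration varies, which your pull-back of the boundary integrals in the weak formulation of \eqref{eq:cemeqs} through an extension of $x \mapsto P_x(x + a(x))$, combined with the geometry-independent stability bound \eqref{eq:bound}, is a standard and workable way to obtain. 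What each approach buys: the citation is short but opaque, while your argument makes the logical structure explicit --- at the cost that the pull-back step is precisely where essentially all of the technical effort of the cited result lives (it is also the core of the proof of Theorem~\ref{thm:reunaderivaatta} in \cite{Darde12}), so your proof is a roadmap of comparable length rather than a shortcut. One small repair: the constraint $\sigma = \sigma^{\rm T}$ in the definition of $\Sigma$ is a closed, not open, condition in $C^1(\overline{\Omega},\R^{n\times n})$, so $\mathcal{D}$ is not open in the ambient product space you wrote down; you should take the second factor of $\mathcal{X}$ to be the closed subspace of symmetric-valued $C^1$ fields, with differentiability understood with respect to symmetric perturbations, exactly as the paper's definition of $\Sigma$ intends.
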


\begin{proof}
The assertion is a weaker version of \cite[Corollary~2.2]{Darde13b}. \qquad
\end{proof}

The numerical approximation of the (partial) Fr\'echet derivatives of $R$ with respect to $\sigma$ and $z$ has been considered in many previous works \cite{Kaipio00,Lechleiter06,Vilhunen02}, and we will compute the needed derivatives with respect to the electrode positions and shapes with the help of \eqref{eq:sampling} \cite{Darde12,Darde13a}.

\section{Approximate conformal invariance in two dimensions}
\label{sec:cinvariance}

In this section, we assume exclusively that $n=2$, denote a smooth, simply connected reference domain by $D$,\footnote{$D$ can be,~e.g.,~the open unit disk.} let $\Phi$ be a (fixed) conformal map sending $\Omega$ onto $D$, and denote its inverse by $\Psi$. As $\partial \Omega$ is also assumed to be smooth, the derivatives of $\Phi$ and $\Psi$ up to an arbitrary order are bounded on $\overline{\Omega}$ and $\overline{D}$, respectively, and $\Phi|_{\partial \Omega}$ defines a $C^\infty$-diffeomorphism of $\partial \Omega$ onto $\partial D$~\cite{Pommerenke92}. For simplicity, it is also assumed that $\sigma \in C^\infty(\overline{\Omega}, \R^{n \times n})$. 

As the general aim of this section is to study asymptotics as the electrode diameters tend to zero, in what follows we emphasize the dependence on a `width parameter' $0<h\leq1$ by denoting the electrodes and the solution to \eqref{eq:cemeqs} by $\{ E^h_m\}_{m=1}^M$ and $(u^h, U^h) \in \mathcal{H}(\Omega)$,\footnote{Observe that $\mathcal{H}(\Omega)$ also depends on $0<h\leq 1$ via its norm.} respectively. The center point (with respect to curve length) of $E^h_m$ is denoted by $y_m \in \partial \Omega$, $m=1, \dots , M$; in particular, $y_m$ is assumed to remain the midpoint of $E_m^h$ independently of $0<h\leq1$.
Moreover, the electrode widths are assumed to scale according to
\begin{equation}
\label{Eh}
|E_m^h| = h |E_m^1|, \qquad m = 1, \dots, M,
\end{equation}
where $\{ E_m^1 \}_{m=1}^M$ is a set of feasible reference electrodes with $\{ y_m \}_{m=1}^M$ as their respective centers.

Let us introduce electrodes on $\partial D$ via $\tilde{E}_m^h = \Phi(E_m^h)$, $m=1, \dots, M$, and define the `push-forward' conductivity and contact resistances for $D$ as
$$
\tilde{\sigma} = J_{\Psi}^{-1} (\sigma \circ \Psi) \big( J_{\Psi}^{-1} \big)^{\rm T} \det J_\Psi \qquad {\rm and} \qquad \tilde{z}_m = |\Phi' (y_m)|\, z_m, \quad m=1, \dots, M,
$$
respectively. Here, $J_{\Psi}: D \to \R^{2\times 2}$ is the Jacobian of $\Psi$ and $| \Phi'| = \sqrt{\det J_\Phi}$ denotes the absolute value of the (complex) derivative $\Phi'$. It is easy to see that $\tilde{\sigma}$ is a feasible conductivity, i.e., it satisfies a condition of the type \eqref{eq:sigma} almost everywhere in $D$, and that $\tilde{\sigma} = \sigma \circ \Psi$ if $\sigma$ is isotropic since $(J_\Psi^{\rm T} J_\Psi) = (\det J_\Psi) \mathbb{I}$ due to conformality. We denote by $(\tilde{u}^h, \tilde{U}^h) \in \mathcal{H}(D)$ the unique solution of the corresponding CEM problem in $D$, that is, 
\begin{equation}
\label{eq:cemeqsD}
\begin{array}{ll}
\displaystyle{\nabla \cdot (\tilde{\sigma}\nabla \tilde{u}^h) = 0 \qquad}  &{\rm in}\;\; D, \\[6pt] 
{\displaystyle{\nu\cdot\tilde{\sigma}\nabla \tilde{u}^h} = 0 }\qquad &{\rm on}\;\;\partial D \setminus\overline{\tilde{E}^h},\\[6pt] 
{\displaystyle \tilde{u}^h+\tilde{z}_m{\nu\cdot\tilde{\sigma}\nabla \tilde{u}^h} = \tilde{U}_m^h } \qquad &{\rm on}\;\; \tilde{E}_m^h, \quad m=1, \dots, M, \\[2pt] 
{\displaystyle \int_{\tilde{E}_m^h}\nu\cdot\tilde{\sigma}\nabla \tilde{u}^h\,{\rm d}S} = I_m, \qquad & m=1,\ldots,M, \\[4pt]
\end{array}
\end{equation}
where $\nu$ now denotes the exterior unit normal of $\partial D$. 

It is easy to deduce (cf.,~e.g.,~\cite{Rieder14}) that the pair $(\tilde{u}^h\circ \Phi, \tilde{U}^h)\in \mathcal{H}(\Omega)$ satisfies the first, second and fourth equations in \eqref{eq:cemeqs} --- with each $E_m$ replaced by $E^h_m$ --- but the third one transforms into the form
\begin{equation}
\label{uusikolmas}
\tilde{u}^h\circ\Phi + \frac{\tilde{z}_m}{|\Phi'|}\, \nu \cdot \sigma \nabla (\tilde{u}^h\circ \Phi) \, = \, \tilde{U}_m^h \qquad {\rm on} \ E_m^h, \qquad m=1, \dots, M,
\end{equation}
which, actually, motivates our definition of $\tilde{z}_m$. As interpreted in \cite{Rieder14}, the pair $(\tilde{u}^h\circ \Phi, \tilde{U}^h)$ satisfies a CEM forward problem in $\Omega$, but with {\em spatially varying} contact resistances; conversely, the same applies to $(u^h\circ \Psi, U^h)$ in $D$. Such a setting can be handled theoretically (cf.~\cite{Hyvonen04}), but it does not provide a reasonable computational framework for tackling the inverse problem of EIT because parametrizing and reconstructing spatially varying contact resistances needlessly complicates numerical algorithms.

Before moving on to the actual (approximative) conformal invariance result for the CEM with {\em constant} contact resistances (cf.~\cite{Rieder14}), let us generalizes/modify \cite[Lemma~3.2]{Hanke11b} so that it serves our purposes. To this end, define
\begin{equation}
\label{affa}
f \, = \, \sum_{m=1}^M I_m \, \delta_{y_m} \quad {\rm on} \ \partial \Omega,
\end{equation}
where $\delta_y \in H^{-1/2-\epsilon}(\partial \Omega)$, $\epsilon > 0$, denotes the delta distribution supported at $y \in \partial \Omega$.

\begin{lemma}
\label{tokalemma}
For any $\epsilon >0$, it holds that 
\begin{equation}
\label{Neumann}
\big\|\nu \cdot \sigma \nabla u^h - f \big\|_{H^{-5/2 - \epsilon}(\partial \Omega)}, \ \big\| \nu \cdot \sigma \nabla (\tilde{u}^h \circ \Phi) - f \big\|_{H^{-5/2 - \epsilon}(\partial \Omega)} \, \leq \, C_{\epsilon} h^2 \|I \|_{\R^M},
\end{equation}
where $C_\epsilon > 0$ is independent of $0 < h \leq 1$ (but not of $\Phi$).
Moreover,
\begin{equation}
\label{perush}
\big\|\nu \cdot \sigma \nabla (u^h - \tilde{u}^h\circ \Phi)  \big\|_{H^{-1}(\partial \Omega)} \, \leq \, C  h^{3/2} \| I \|_{\R^M},
\end{equation}
where $C>0$ is also independent $0 < h \leq 1$ (but not of $\Phi$).
\end{lemma}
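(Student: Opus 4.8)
The plan is to treat the two bounds in \eqref{Neumann} by a single duality argument and to obtain \eqref{perush} from an energy estimate for the complete electrode problem solved by the difference. Throughout I write $q^h = \nu\cdot\sigma\nabla u^h$ and $\tilde q^h = \nu\cdot\sigma\nabla(\tilde u^h\circ\Phi)$ for the two conormal traces on $\partial\Omega$; both vanish on $\partial\Omega\setminus\overline{E^h}$ and satisfy $\int_{E_m^h}q^h\,\dd s = \int_{E_m^h}\tilde q^h\,\dd s = I_m$, the latter because $(\tilde u^h\circ\Phi,\tilde U^h)$ retains the first, second and fourth equations of \eqref{eq:cemeqs}. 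For the first bound I would test $q^h-f$ against $\phi\in H^{5/2+\eps}(\partial\Omega)$, which embeds into $C^2(\partial\Omega)$ since $\partial\Omega$ is one-dimensional, and dualize: as $q^h$ is supported on $E^h$ and its zeroth moments equal the currents, the pairing collapses to $\sum_m\int_{E_m^h}q^h(\phi-\phi(y_m))\,\dd s$.

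Parametrizing $E_m^h$ by arc length $s$ centred at the midpoint $y_m$, I would expand $\phi = \phi(y_m) + s\,\partial_s\phi(y_m) + O(s^2\norm{\phi}_{C^2})$. The quadratic remainder contributes at most $Ch^2\norm{\phi}_{C^2}\sum_m\norm{q^h}_{L^1(E_m^h)}$, and $\norm{q^h}_{L^1(E_m^h)}\leq |E_m^h|^{1/2}\norm{q^h}_{L^2(E_m^h)} = O(\norm{I}_{\R^M})$ follows from \eqref{eq:bound}, whose right-hand side scales like $h^{-1/2}\norm{I}_{\R^M}$ under \eqref{Eh}, combined with $|E_m^h| = O(h)$. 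The linear term leaves the first moment $\partial_s\phi(y_m)\int_{E_m^h}s\,q^h\,\dd s$, so everything reduces to showing $\int_{E_m^h}s\,q^h\,\dd s = O(h^2\norm{I}_{\R^M})$.

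This first-moment estimate is the main obstacle. Since $\int_{E_m^h}s\,\dd s = 0$ and the Robin condition gives $z_m q^h = U_m^h - u^h$ on $E_m^h$, the moment equals $-z_m^{-1}\int_{E_m^h}s\,u^h\,\dd s$, i.e.\ the first moment of the part of $u^h|_{E_m^h}$ that is \emph{odd} in $s$ about $y_m$. A direct use of \eqref{eq:bound} controls this only by $O(h\norm{I}_{\R^M})$, which is insufficient; the extra power of $h$ requires showing that the odd part of $q^h$ is a factor $h$ smaller than the density itself. I would establish this by a local rescaling $x = y_m + h\xi$ that blows the electrode up to a fixed reference electrode on a half-plane: in these coordinates the antisymmetry is forced only by the $O(h)$ tangential variation of $\sigma$, by the curvature of $\partial\Omega$ at $y_m$, and by the smooth far field of the remaining electrodes, each an $O(h)$ perturbation of the reflection-symmetric model problem. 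This gives $L^2(E_m^h)$-norm $O(h^{1/2}\norm{I}_{\R^M})$ for the odd part (a factor $h$ below the $O(h^{-1/2}\norm{I}_{\R^M})$ size of $q^h$), whence $\norm{s}_{L^2(E_m^h)} = O(h^{3/2})$ and Cauchy--Schwarz yield the moment bound; this is precisely the quantitative refinement of \cite[Lemma~3.2]{Hanke11b}. The second bound in \eqref{Neumann} follows identically for $\tilde q^h$: by \eqref{uusikolmas} the pair $(\tilde u^h\circ\Phi,\tilde U^h)$ solves the same CEM on the electrodes $E_m^h$ with the same currents $I_m$ but with contact resistance $\zeta_m = \tilde z_m/|\Phi'| = z_m|\Phi'(y_m)|/|\Phi'| = z_m(1+O(h))$ on $E_m^h$, a smooth $O(h)$ perturbation that leaves every step intact.

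For \eqref{perush} I would set $w^h = u^h - \tilde u^h\circ\Phi$ and $W^h = U^h - \tilde U^h$. Subtracting the two Robin conditions shows that $(w^h,W^h)$ solves the CEM in $\Omega$ with the \emph{constant} contact resistances $z_m$, vanishing net currents, and the Robin data $W_m^h - g_m$, where $g_m = (z_m-\zeta_m)\tilde q^h$ obeys $\norm{g_m}_{L^2(E_m^h)} \leq Ch\,\norm{\tilde q^h}_{L^2(E_m^h)} = O(h^{1/2}\norm{I}_{\R^M})$. Its weak form is $B((w^h,W^h),(v,V)) = -\sum_m z_m^{-1}\int_{E_m^h}g_m(v-V_m)\,\dd s$ with the coercive CEM bilinear form $B$, so Lax--Milgram (with coercivity uniform up to the electrode-dependent norm) gives $\norm{(w^h,W^h)}_{\mathcal{H}(\Omega)} = O(h^{1/2}\norm{I}_{\R^M})$, hence also $\norm{\nu\cdot\sigma\nabla w^h}_{L^2(E_m^h)} = O(h^{1/2}\norm{I}_{\R^M})$ through $z_m\,\nu\cdot\sigma\nabla w^h = W_m^h - g_m - w^h$ and a trace estimate. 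Finally I would pair $q^h-\tilde q^h = \nu\cdot\sigma\nabla w^h$ with $\psi\in H^1(\partial\Omega)$, subtract $\psi(y_m)$ on each $E_m^h$ (legitimate since the net currents cancel), and use the one-dimensional Morrey bound $|\psi-\psi(y_m)| \leq Ch^{1/2}\norm{\psi}_{H^1(\partial\Omega)}$ on $E_m^h$ together with $|E_m^h|^{1/2} = O(h^{1/2})$; Cauchy--Schwarz then gives $\norm{\psi-\psi(y_m)}_{L^2(E_m^h)} = O(h\norm{\psi}_{H^1})$, and multiplying by $\norm{\nu\cdot\sigma\nabla w^h}_{L^2(E_m^h)} = O(h^{1/2}\norm{I}_{\R^M})$ produces the missing $h^{3/2}$ and yields \eqref{perush}.
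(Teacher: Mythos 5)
Your proof of \eqref{perush} is correct but takes a genuinely different route from the paper. The paper never forms the difference problem: it inserts the mean current density $I_m/|E_m^h|$ as an intermediary between $f^h=\nu\cdot\sigma\nabla u^h$ and $\tilde f^h=\nu\cdot\sigma\nabla(\tilde u^h\circ\Phi)$, invokes the deviation estimate $\|f^h-I_m/|E_m^h|\|_{L^2(E_m^h)}\leq Ch^{1/2}\|I\|_{\R^M}$ from the second part of the proof of \cite[Lemma~3.2]{Hanke11b} separately for both conormal traces, and finishes with the Poincar\'e inequality on each electrode. You instead observe that $(w^h,W^h)=(u^h-\tilde u^h\circ\Phi,\,U^h-\tilde U^h)$ solves a CEM problem with the constant resistances $z_m$, vanishing net currents, and a Robin forcing $g_m=(z_m-\tilde z_m/|\Phi'|)\tilde q^h$ of size $O(h^{1/2}\|I\|_{\R^M})$ (by \eqref{uusikolmas}, smoothness of $|\Phi'|$, and the crude bound $\|\tilde q^h\|_{L^2(E_m^h)}=O(h^{-1/2}\|I\|_{\R^M})$ from \eqref{eq:bound} and \eqref{Eh}), and then apply Lax--Milgram with the electrode-independent coercivity that underlies \eqref{eq:bound}. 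This is sound, and it buys something: your argument needs only the $O(h^{-1/2})$ stability bound rather than the refined $O(h^{1/2})$ deviation estimate imported from \cite{Hanke11b}, and it localizes the whole effect of the conformal distortion into the explicitly $O(h)$ resistance mismatch on each shrinking electrode. Your final duality step (subtracting $\psi(y_m)$, legitimate since the net currents cancel, plus the one-dimensional Morrey bound) is an immaterial variant of the paper's mean-value-plus-Poincar\'e step.

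For \eqref{Neumann} the situation is different: the paper proves nothing here, stating that the estimate follows directly from the line of reasoning in the proof of \cite[Lemma~3.2]{Hanke11b}, whereas you attempt a derivation. Your reduction --- duality against $H^{5/2+\eps}(\partial\Omega)\hookrightarrow C^2(\partial\Omega)$, the quadratic remainder controlled via $\|q^h\|_{L^1(E_m^h)}=O(\|I\|_{\R^M})$, leaving only the first-moment bound $\int_{E_m^h}s\,q^h\,\dd s=O(h^2\|I\|_{\R^M})$ --- is correct, but the blow-up/reflection-symmetry justification of that moment bound is a heuristic, not a proof: making ``an $O(h)$ perturbation of the symmetric model problem'' rigorous requires uniform estimates for the rescaled problems (the logarithmic far field in two dimensions, the edge singularities of the current density), which is precisely the hard content of the cited lemma that you would be re-deriving. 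The quickest repair is to note that the very deviation estimate quoted above yields the moment bound in one line: since $y_m$ is the midpoint, $\int_{E_m^h}s\,\dd s=0$, so $\big|\int_{E_m^h}s\,q^h\,\dd s\big|\leq\|s\|_{L^2(E_m^h)}\|q^h-I_m/|E_m^h|\|_{L^2(E_m^h)}=O(h^{3/2})\,O(h^{1/2}\|I\|_{\R^M})=O(h^2\|I\|_{\R^M})$, with no symmetry argument needed. Your observation that the effective resistance equals $z_m(1+O(h))$ on $E_m^h$ is exactly why the same reasoning extends to $\tilde q^h$ with constants depending on $\Phi$, in agreement with the paper.
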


\begin{proof}
Since both $u^h$ and $\tilde{u}^h\circ \Phi$ satisfy the first, second and fourth conditions of~\eqref{eq:cemeqs}, the first estimate \eqref{Neumann} directly follows from the line of reasoning in the proof of \cite[Lemma~3.2]{Hanke11b}.

In order to deduce \eqref{perush}, let $\varphi \in H^1(\partial \Omega)$ be arbitrary and denote its mean value over $E^h_m$ by $\bar{\varphi}_m^h$, $m = 1, \dots, M$. Moreover, we set $f^h = \nu \cdot \sigma \nabla u^h$ and $\tilde{f}^h = \nu \cdot \sigma \nabla (\tilde{u}^h \circ \Phi)$ on $\partial \Omega$. Mimicking the proof of \cite[Lemma~3.2]{Hanke11b}, we start by writing
\begin{equation*}
\big\langle f^h - \tilde{f}^h, \varphi \big\rangle_{\partial \Omega} 
   \, =\, \sum_{m=1}^M \int_{E_m^h} \big(f^h - I_m/|E_m^h| \big) \varphi \, {\rm d} S 
         +  \sum_{m=1}^M \int_{E_m^h} \big(I_m/|E_m^h| - \tilde{f}^h \big) \varphi \, {\rm d} S,
\end{equation*}
and then estimate all terms appearing on the right-hand side in the same manner. 

Indeed, since $f^h - I_m/|E_m^h|$ has vanishing mean on $E_m^h$, 
\begin{align*}
\int_{E_m^h} \big(f^h - I_m/|E_m^h| \big) \varphi \, {\rm d} S \, &= \, 
\int_{E_m^h} \big(f^h - I_m/|E_m^h| \big) (\varphi - \bar{\varphi}^h_m) \, {\rm d} S \\[1mm]
\, & \leq \, \big\|f^h - I_m/|E_m^h| \big \|_{L^2(E_m^h)} \|\varphi - \bar{\varphi}^h_m  \|_{L^2(E_m^h)}.
\end{align*}
Following the same logic as in the second part of the proof of \cite[Lemma~3.2]{Hanke11b}, one can show that
$$
\big\|f^h - I_m/|E_m^h| \big \|_{L^2(E_m^h)} \, = \, C h^{1/2} \| I \|_{\R^M}.
$$
Moreover, by the Poincar{\'e} inequality for a convex domain~\cite{Payne60},
$$ 
\|\varphi - \bar{\varphi}^h_m  \|_{L^2(E_m^h)} \, \leq \, C h \| \varphi \|_{H^1(E_m^h)} .
$$
Combining the previous three estimates, we obtain that
$$
\int_{E_m^h} \big(f^h - I_m/|E_m^h| \big) \varphi \, {\rm d} S  \, \leq \, C h^{3/2} \| \varphi \|_{H^1(E_m^h)}\| I \|_{\R^M} , \qquad m= 1, \dots, M.
$$
Repeating exactly the same line of reasoning for $\tilde{f}^h$, we also get
$$
\int_{E_m^h} \big(I_m/|E_m^h| - \tilde{f}^h \big) \varphi \, {\rm d} S  \, \leq \, C h^{3/2} \| \varphi \|_{H^1(E_m^h)} \| I \|_{\R^M}, \qquad m= 1, \dots, M,
$$
where the constant this time around depends on $\Phi$.

To sum up, we have altogether demonstrated that
$$
\big\| f^h - \tilde{f}^h \big\|_{H^{-1}(\partial \Omega)} \, = \, \sup_{0 \not= \varphi \in H^1} \frac{\langle f^h - \tilde{f}^h, \varphi \rangle_{\partial \Omega}}{\| \varphi \|_{H^1(\partial \Omega)}} \, \leq \, C h^{3/2}\| I \|_{\R^M},
$$
which completes the proof. \qquad 
\end{proof}

Due to the boundedness from the zero-mean subspace of $H^s(\partial \Omega)$ to $H^{s+1}(\partial \Omega)/\R$, $s\in\R$, of the Neumann-to-Dirichlet map associated to the conductivity equation with a smooth conductivity~\cite{Lions72}, the second estimate of Lemma~\ref{tokalemma} also implies
\begin{equation}
\label{Dirichlet}
\big\| u^h - \tilde{u}^h \circ \Phi \big\|_{L^2(\partial \Omega)/\R} \, \leq \, C h^{3/2} \| I \|_{\R^M}
\end{equation}
for any $0 < h \leq 1$.

The following main theorem of this section demonstrates that $\tilde{U} = U + O(h^{1/2})$ in the topology of $\R^M / \R$ as $h>0$ goes to zero. Take note that such a result cannot be straightforwardly deduced by subtracting the variational formulations of \eqref{eq:cemeqs} and \eqref{eq:cemeqsD}, followed by an obvious change of variables: As hinted by \eqref{eq:bound} and \eqref{Neumann}, the $\mathcal{H}(\Omega)$-norm of $(u^h, U^h)$ does not stay bounded as $h>0$ tends to zero, which reduces the applicability of such a variational argument. In particular, the mean {\em current  densities} through the electrodes explode when the electrodes shrink, leading also to unbounded growth of the electrode voltages due to the potential jumps caused by the contact resistances.

\begin{theorem}
\label{paalause}
For all $0 < h \leq 1$, it holds that
$$
\big\| U^h - \tilde{U}^h \big \|_{\R^M / \R} \, \leq \, C h^{1/2} \|I \|_{\R^M},
$$
where $C(\Omega, \sigma, \{z_m\}, \{y_m\},\Phi) > 0$ is independent of $h$.
\end{theorem}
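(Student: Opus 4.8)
The plan is to bypass the variational route flagged before the statement and to work directly from the Robin conditions, exploiting that the $O(h^{-1})$ blow-up of the electrode current densities is the \emph{same} for both problems and therefore cancels. Write $w^h := \tilde u^h\circ\Phi$ and set $f^h := \nu\cdot\sigma\nabla u^h$, $\tilde f^h := \nu\cdot\sigma\nabla w^h$ on $\partial\Omega$. Since $U_m^h$ and $\tilde U_m^h$ are constant on $E_m^h$, I would average the third condition of \eqref{eq:cemeqs} and its transform \eqref{uusikolmas} over $E_m^h$ and subtract, obtaining for each $m$
\begin{equation*}
U_m^h - \tilde U_m^h \,=\, \frac{1}{|E_m^h|}\int_{E_m^h}(u^h - w^h)\,\dd S \,+\, \frac{1}{|E_m^h|}\int_{E_m^h}\Big(z_m f^h - \frac{\tilde z_m}{|\Phi'|}\,\tilde f^h\Big)\dd S .
\end{equation*}
As $(u^h,U^h)$ and $(\tilde u^h,\tilde U^h)$ are each fixed only up to an additive constant, I am free to ground $u^h$ and $\tilde u^h$ so that $u^h - w^h$ has zero mean on $\partial\Omega$; the resulting vector then represents $U^h - \tilde U^h$ in $\R^M/\R$, and its Euclidean norm dominates $\|U^h-\tilde U^h\|_{\R^M/\R}$. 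It therefore suffices to estimate the two averages above.

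First I would dispatch the trace term. Under the mean-zero normalization, \eqref{Dirichlet} gives $\|u^h - w^h\|_{L^2(\partial\Omega)} \le Ch^{3/2}\|I\|_{\R^M}$, so Cauchy--Schwarz on $E_m^h$ combined with the width scaling $|E_m^h| = h|E_m^1|$ from \eqref{Eh} yields
\begin{equation*}
\Big| \frac{1}{|E_m^h|}\int_{E_m^h}(u^h - w^h)\,\dd S \Big| \,\le\, |E_m^h|^{-1/2}\,\|u^h - w^h\|_{L^2(\partial\Omega)} \,\le\, C h\,\|I\|_{\R^M},
\end{equation*}
comfortably within the asserted order. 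The conceptual heart of the argument is the flux term: the fourth condition of \eqref{eq:cemeqs} is invariant under $\Phi$, whence $\int_{E_m^h}f^h\,\dd S = \int_{E_m^h}\tilde f^h\,\dd S = I_m$. Writing the integrand as $z_m(f^h - \tilde f^h) + (z_m - \tilde z_m/|\Phi'|)\tilde f^h$, the first part integrates to $z_m(I_m - I_m) = 0$; this is exactly the cancellation that neutralizes the unbounded growth of the current densities (and of the voltages) warned about before the statement.

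The remaining contact-resistance-mismatch term $|E_m^h|^{-1}\int_{E_m^h}(z_m - \tilde z_m/|\Phi'|)\tilde f^h\,\dd S$ is where I expect the real work to lie, and it is the main obstacle. By the definition $\tilde z_m = |\Phi'(y_m)|z_m$ the coefficient $z_m - \tilde z_m/|\Phi'| = z_m(1 - |\Phi'(y_m)|/|\Phi'|)$ \emph{vanishes at} $y_m$ and, as $|\Phi'|$ is smooth and bounded below on $\partial\Omega$, is $O(h)$ on $E_m^h$. Splitting $\tilde f^h = I_m/|E_m^h| + g^h$ with $\|g^h\|_{L^2(E_m^h)} \le Ch^{1/2}\|I\|_{\R^M}$ (from the proof of Lemma~\ref{tokalemma}), the $g^h$-part is controlled by Cauchy--Schwarz and the $O(h)$ bound on the coefficient. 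The delicate piece is the near-singular part $I_m/|E_m^h|$, against which a crude $O(h)$ estimate of the coefficient only yields an $O(1)$ contribution. To beat this one must use that $y_m$ is the \emph{midpoint} of $E_m^h$: expanding the coefficient in the arc-length variable centred at $y_m$, its leading linear term is odd and hence integrates to zero over the symmetric set $E_m^h$, so that $\int_{E_m^h}(z_m - \tilde z_m/|\Phi'|)\,\dd S = O(h^3)$, which together with the $|E_m^h|^{-2}$ prefactor produces only an $O(h)$ term.

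Summing the three estimates over $m = 1,\dots,M$ and passing to the quotient norm then yields $\|U^h - \tilde U^h\|_{\R^M/\R} \le Ch^{1/2}\|I\|_{\R^M}$ (in fact this route appears to give the better power $h$), with the constant depending on $\sigma$, $\{z_m\}$ and $\{y_m\}$ and, through the derivatives of $|\Phi'|$ and the bound for the Neumann-to-Dirichlet map, on $\Phi$ and $\Omega$. I expect the midpoint-symmetry cancellation in the contact-resistance term to be the binding step, both because it is the only place where a naive estimate fails---giving $O(1)$ rather than a decaying contribution---and because it is what forces the precise placement hypothesis on $y_m$ and the dependence of $C$ on $\Phi$.
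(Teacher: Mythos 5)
Your proof is correct, but it takes a genuinely different route from the paper's. The paper fixes mean-free representatives, encodes the voltage differences as piecewise constant functions $\mathbb{U}^h - \tilde{\mathbb{U}}^h$ on the electrodes, pays a factor $h^{-1}$ through the duality estimate of Lemma~\ref{apulemma}, and then bounds the two terms of \eqref{elecest} in the negative norm $H^{-5/2-\epsilon}(\partial\Omega)$: the trace term via \eqref{Dirichlet}, and the flux term via the smooth extensions $Z, \tilde{Z}$ together with the identity $Zf = \tilde{Z}f$ (valid because $Z$ and $\tilde{Z}$ agree at the delta points $y_m$) and the $O(h^2)$ estimate \eqref{Neumann}. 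You instead test the two Robin conditions directly against $|E_m^h|^{-1}\chi_m^h$, which cancels the $O(h^{-1})$ singular flux \emph{exactly} through the shared constraint $\int_{E_m^h} f^h\,\dd S = \int_{E_m^h}\tilde{f}^h\,\dd S = I_m$, and you then neutralize the contact-resistance mismatch against $I_m/|E_m^h|$ by Taylor expansion plus the midpoint symmetry of $y_m$ --- structurally the same two facts ($\tilde{z}_m = |\Phi'(y_m)|z_m$ at the electrode center, and the centering of $E_m^h$ at $y_m$, which also underlies the $h^2$ rate in \eqref{Neumann}) that the paper exploits, but deployed in physical space rather than through distributional norms. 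The ingredients you import are legitimately available: \eqref{Dirichlet} is stated before the theorem, and the deviation bound $\|\tilde{f}^h - I_m/|E_m^h|\|_{L^2(E_m^h)} \leq C h^{1/2}\|I\|_{\R^M}$ is established for both $f^h$ and $\tilde{f}^h$ inside the proof of Lemma~\ref{tokalemma}; your grounding remark correctly realizes the $L^2(\partial\Omega)/\R$ infimum, and the quotient norm is indeed dominated by the Euclidean norm of any representative. As for what each approach buys: the paper's negative-norm framework reuses the machinery of \cite{Hanke11b} wholesale and needs nothing about the Robin coefficient beyond its value at $y_m$, but the crude factor $h^{-1}$ in Lemma~\ref{apulemma}, combined with the lossy embedding of $L^2(\partial \Omega)$ into $H^{-5/2-\epsilon}(\partial \Omega)$ in \eqref{ekatermi}, degrades the rate to $h^{1/2}$; your averaging argument is more elementary, bypasses Lemma~\ref{apulemma} and negative Sobolev indices entirely, and --- as you observe --- your three bounds ($Ch$ for the trace average, $Ch$ for the $g^h$ part, $Ch$ for the near-singular part via the odd-term cancellation) yield the stronger conclusion $\|U^h - \tilde{U}^h\|_{\R^M/\R} \leq C h\, \|I\|_{\R^M}$, of which the stated theorem is a weaker corollary.
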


\begin{proof}
We begin by fixing the representatives of the equivalence classes $U^h, \tilde{U}^h \in \R^M/ \R$ to be the mean-free ones, but abuse the notation by denoting them with the same symbols, i.e., $U^h, \tilde{U}^h \in \R_\diamond^M$. Notice that this also fixes the ground levels for $u^h$ and $\tilde{u}^h$. The corresponding piecewise constant functions on the electrodes of $\partial \Omega$ are
$$
\mathbb{U}^h = \sum_{m=1}^M U_m^h \chi_m^h \in L^2(\partial \Omega)
\qquad  {\rm and} \qquad
\tilde{\mathbb{U}}^h = \sum_{m=1}^M \tilde{U}_m^h \chi_m^h \in L^2(\partial \Omega), 
$$
respectively, with $\chi_m^h$ denoting the characteristic function of $E_m^h$.
It is relatively easy to see that for $0<h\leq 1$,
\begin{equation}
\label{isoU}
\big\| U^h - \tilde{U}^h \big\|_{\R^M} 
\, \leq \, \frac{C}{h} \, \big\| \mathbb{U}^h - \tilde{\mathbb{U}}^h \big\|_{H^{s}(\partial \Omega)/\R}, \qquad s < 1/2,
\end{equation}
where $C = C(s)>0$ can be chosen to be independent of $h$; see Lemma~\ref{apulemma} below. We also introduce the contact resistance functions
$$
Z = \sum_{m=1}^M z_m \chi_m^1 \in C^\infty\big(\overline{E^1}\big)
\qquad  {\rm and} \qquad
\tilde{Z} = \sum_{m=1}^M \frac{\tilde{z}_m}{|\Phi'|} \chi_m^1 \in C^\infty\big(\overline{E^1}\big),
$$
where the motivation for the latter comes from \eqref{uusikolmas}. We extend $Z$ and $\tilde{Z}$ onto the whole boundary as elements of $C^\infty(\partial \Omega)$, denoting them still by the same symbols.
Finally, let $\chi^h = \sum_{m=1}^M\chi^h_m$ be the characteristic function of $E^h$.

Fix $\epsilon >0$. Due to the triangle inequality, the second and third equations of \eqref{eq:cemeqs}, and the Robin condition \eqref{uusikolmas},
\begin{align}
\label{elecest}
 \big\| \mathbb{U}^h - \tilde{\mathbb{U}}^h\big\|_{H^{-5/2-\epsilon}(\partial \Omega)/\R} \,   & \leq \,    
\big \| \chi^h (u^h - \tilde{u}^h\circ \Phi) \big \|_{H^{-5/2-\epsilon}(\partial \Omega)/\R} \\[1mm] 
&  \ \ \quad +   \big \|Z \nu \cdot \sigma \nabla u^h - \tilde{Z} \nu \cdot \sigma \nabla (\tilde{u}^h\circ \Phi) \big\|_{H^{-5/2-\epsilon}(\partial \Omega)}. \nonumber
\end{align}
Let us start with the first term on the right-hand side of \eqref{elecest}:
\begin{align}
\label{ekatermi}
\big \| \chi^h (u^h - \tilde{u}^h\circ \Phi) \big \|_{H^{-5/2-\epsilon}(\partial \Omega)/\R} \, &\leq \, \big \| \chi^h (u^h - \tilde{u}^h\circ \Phi) \big \|_{L^2(\partial \Omega)/\R} \nonumber \\[1mm]
\, &\leq  \, \| u^h - \tilde{u}^h \circ \Phi \|_{L^2(\partial \Omega)/\R}
\, \leq \, C h^{3/2} \| I \|_{\R^M},
\end{align}
where the last inequality is~\eqref{Dirichlet}.

To estimate the second term on the right-hand side of \eqref{elecest}, 
notice first that
$$
Z f \, = \, \tilde{Z} f 
$$
in the sense of distributions on $\partial \Omega$ since $f$ of \eqref{affa} is a linear combination of delta distributions and $Z, \tilde{Z} \in C^\infty(\partial \Omega)$ coincide on the support of $f$, that is,
$$
\tilde{Z}(y_m) =  \frac{\tilde{z}_m}{|\Phi'(y_m)|} = z_m = Z(y_m), \qquad m=1, \dots, M. 
$$
As a consequence, by virtue of the triangle inequality, 
\begin{align}
\label{tokatermi}
\big \|Z \nu \cdot \sigma \nabla u^h - \tilde{Z} \nu \cdot \sigma \nabla (\tilde{u}^h\circ \Phi) \big\|
\, & \leq \, 
\big\| Z \big(\nu \cdot \sigma \nabla u^h - f \big) \big \| \nonumber \\[1mm] 
& \ \ \quad + \big\| \tilde{Z} \big(f -  \nu \cdot \sigma \nabla (\tilde{u}^h\circ \Phi \big)\big) \big\| \nonumber \\[1mm]
& \, \leq \,  C h^2 \| I \|_{\R^M} 
\end{align}
where all norms are those of $H^{-5/2 - \epsilon}(\partial \Omega)$ and the last inequality is an easy consequence of \eqref{Neumann}.

The assertion now follows by combining \eqref{isoU}--\eqref{tokatermi}. \qquad 
\end{proof}

The following lemma complements Theorem~\ref{paalause} and completes this section.

\begin{lemma}
\label{apulemma}
Each $V \in \R_\diamond^M$ and the associated piecewise constant function $\mathbb{V}^h  =  \sum_{m=1}^M V_m \chi_m^h \in L^2(\partial \Omega)$ satisfy
$$
\| V \|_{\R^M} \, \leq \, \frac{C_s}{h} \, \big\| \mathbb{V}^h \big\|_{H^s(\partial \Omega)/\R} \qquad {\rm for} \ {\rm all} \ s \leq -1/2 \ {\rm and} \ 0 < h \leq 1,
$$
with some $C_s(\partial \Omega, \{ y_m \}) > 0$ independent of $h$ and $V$. 
\end{lemma}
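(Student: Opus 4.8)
The plan is to argue by duality. For $s \leq -1/2$ the piecewise constant function $\mathbb{V}^h$ lies in $L^2(\partial \Omega) \subset H^s(\partial \Omega)$, so its quotient norm admits the representation
\[
\norm{\mathbb{V}^h}_{H^s(\partial \Omega)/\R} \,=\, \sup\Big\{ \frac{\langle \mathbb{V}^h, \psi\rangle_{\partial \Omega}}{\norm{\psi}_{H^{-s}(\partial \Omega)}} \ : \ \psi \in H^{-s}(\partial \Omega),\ \int_{\partial \Omega} \psi \, {\rm d}S = 0,\ \psi \neq 0 \Big\},
\]
the mean-free constraint being exactly the annihilator of the constants we quotient out. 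It therefore suffices to exhibit one admissible (mean-free) test function $\psi$, depending on $V$ and on the geometry but not on $h$, for which $\langle \mathbb{V}^h, \psi\rangle_{\partial \Omega} \gtrsim h \norm{V}_{\R^M}^2$ while $\norm{\psi}_{H^{-s}(\partial \Omega)} \lesssim \norm{V}_{\R^M}$. Dividing these two estimates yields $\norm{\mathbb{V}^h}_{H^s/\R} \gtrsim h \norm{V}_{\R^M}$, which is precisely the assertion after rearranging.

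The construction I would use is $\psi = \sum_{m=1}^M V_m g_m$, where $g_1, \dots, g_M \in C^\infty(\partial \Omega)$ are fixed, $h$-independent, nonnegative bumps with mutually disjoint supports, chosen so that $g_m \geq 1$ on $E_m^1 \supset E_m^h$ and so that $\int_{\partial \Omega} g_m \, {\rm d}S$ equals one common value $\gamma$ for every $m$. Such bumps exist because the reference electrodes $\{ E_m^1 \}$ are feasible, hence well-separated, leaving fixed-length gaps in which the (possibly tall and thin) normalizing mass can be deposited while keeping each support disjoint from the other electrodes. The purpose of the common integral is that, since $V \in \R^M_\diamond$, one has $\int_{\partial \Omega} \psi \, {\rm d}S = \gamma \sum_m V_m = 0$, so $\psi$ is automatically admissible. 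Because the supports of the $g_m$ avoid $E_k^h$ for $k \neq m$, the pairing collapses to a diagonal sum,
\[
\langle \mathbb{V}^h, \psi \rangle_{\partial \Omega} \,=\, \sum_{m=1}^M V_m^2 \int_{E_m^h} g_m \, {\rm d}S \,\geq\, \sum_{m=1}^M V_m^2 \, |E_m^h| \,=\, h \sum_{m=1}^M V_m^2 \, |E_m^1| \,\geq\, h \, \ell \, \norm{V}_{\R^M}^2,
\]
with $\ell = \min_m |E_m^1| > 0$, where I have used $g_m \geq 1$ on $E_m^h$ and the scaling \eqref{Eh}. For the denominator, the triangle inequality and the fixed smoothness of the $g_m$ give $\norm{\psi}_{H^{-s}(\partial \Omega)} \leq \sum_m |V_m| \norm{g_m}_{H^{-s}} \leq \sqrt{M} \, (\max_m \norm{g_m}_{H^{-s}}) \norm{V}_{\R^M} =: C_s \norm{V}_{\R^M}$, a bound finite and independent of $h$ and $V$.

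The one genuinely important point, and the source of the factor $1/h$ rather than $1/h^{1/2}$, is that the test functions must be taken \emph{$h$-independent in width}. A naive choice of bumps concentrated on $E_m^h$ would enlarge the numerator but would inflate $\norm{\psi}_{H^{-s}}$ and yield only the strictly weaker lower bound $\norm{\mathbb{V}^h}_{H^s/\R} \gtrsim h^{1/2 - s} \norm{V}_{\R^M}$; using fixed-width bumps is exactly what keeps the denominator of order $\norm{V}_{\R^M}$ and recovers the sharp power of $h$. Assembling the two displayed estimates gives $\norm{\mathbb{V}^h}_{H^s/\R} \geq (\ell / C_s) \, h \, \norm{V}_{\R^M}$, and rearranging produces the lemma with the stated constant $C_s \leftarrow C_s / \ell$, which depends on $s$, on $\partial \Omega$, and on the reference configuration $\{ y_m \}$ through the bumps, but not on $h$ or $V$. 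I expect the only delicate bookkeeping to be the explicit construction of the equal-mass, disjointly supported bumps; this is where the well-separation of the electrodes is used, but it is purely a geometric arrangement and carries no analytic difficulty.
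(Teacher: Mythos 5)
Your proposal is correct and takes essentially the same route as the paper: both arguments test $\mathbb{V}^h$ against a fixed, $h$-independent, mean-free combination $\sum_{m} V_m \varphi_m$ of bump functions tied to the reference electrodes $E_m^1$, use the duality between $H^s(\partial\Omega)/\R$ and the mean-free space $H^{-s}_\diamond(\partial\Omega)$, and exploit that the diagonal pairing contributes $\gtrsim h \norm{V}_{\R^M}^2$ while the $H^{-s}$-norm of the test function is $\lesssim \norm{V}_{\R^M}$. The only (cosmetic) difference is in normalization: the paper takes $\varphi_m \equiv 1/|E_m^1|$ on $E_m^1$ with unit total integral so the pairing equals $h\norm{V}_{\R^M}^2$ exactly and bounds the denominator by finite-dimensional norm equivalence, whereas you take $g_m \geq 1$ on $E_m^1$ with a common mass and use Cauchy--Schwarz/triangle inequality --- the same argument in slightly different clothing.
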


\begin{proof}
Fix $s \leq -1/2$ and choose functions $\varphi_l \in H^{-s}(\partial \Omega)$, $l=1, \dots, M$, such that 
$$
\varphi_l \, \equiv \, \frac{1}{|E^1_m|} \delta_{l,m} \quad {\rm on} \ E^1_m \qquad
{\rm and} \qquad \int_{\partial \Omega} \varphi_l \, {\rm d} S = 1 
$$
for $l,m = 1, \dots, M$ and with $\delta_{l,m}$ being the Kronecker delta. In other words, $\varphi_l$ is constant on the $l$th {\em reference} electrode $E_l^{1}$, its support does not intersect $E^1_m$ for $m\not=l$, and its integral over $\partial \Omega$ is independent of $l = 1,\dots, M$. (The precise value of the integral is not important.)

For each $V \in \R_\diamond^M$, we introduce the corresponding test function
$$
\phi_V \, = \, \sum_{m=1}^{M} V_m \varphi_m \in H^{-s}_\diamond(\partial \Omega)
$$
where the mean-free Sobolev space $H^{-s}_\diamond(\partial \Omega)$ is defined as
$$
H^{-s}_\diamond(\partial \Omega) \, = \, \left\{ \psi \in  H^{-s}(\partial \Omega) \ \Big| \ \int_{\partial \Omega} \psi\, {\rm d} S = 0 \right\}.
$$
In particular, observe that $H^{-s}_\diamond(\partial \Omega)$ realizes the dual of $H^s(\partial \Omega)/\R$. Let us define 
$$
\| V \|_s \, := \, \|\phi_V \|_{H^{-s}(\partial \Omega)}, \qquad V \in \R_\diamond^M.
$$
Since $\| \, \cdot \, \|_s$ is obviously a norm, there exists a constant $C_s>0$ such that
\begin{equation}
\label{equinorm}
\| V \|_s \, \leq \,  C_s \| V \|_{\R^M} \qquad \text{for all } V \in \R_\diamond^M
\end{equation}
due to the finite-dimensionality of $\R_\diamond^M$.

It follows from the aforementioned duality between $H^{s}(\partial \Omega)/\R$ and $H^{-s}_\diamond(\partial \Omega)$, together with \eqref{Eh} and \eqref{equinorm}, that
\begin{align*}
\big \| \mathbb{V}^{h} \big \|_{H^s(\partial \Omega) / \R} \, &= \, 
\sup_{0 \not= \phi \in H^{-s}_{\diamond}} \frac{\langle \mathbb{V}^{h}, \phi \rangle_{\partial \Omega}}{\|\phi\|_{H^{-s}(\partial \Omega)}} \, \geq \, \frac{\langle \mathbb{V}^{h}, \phi_V \rangle_{\partial \Omega}}{\|\phi_V\|_{H^{-s}(\partial \Omega)}} \\ 
\, &= \, \frac{1}{\| V \|_s} \sum_{m=1}^M \frac{|E_m^h|}{|E^1_m|} \, V_m^2 \, =  \,h \, \frac{\|V\|_{\R^M}^2}{\| V \|_s} 
\, \geq  \, \frac{h}{C_s} \|V\|_{\R^M}
\end{align*}
for all $0 \not= V \in \R_\diamond^M$. This completes the proof. \qquad
\end{proof}

\begin{remark} Although Lemma~\ref{apulemma} obviously also holds for all $-1/2 < s < 1/2$, these Sobolev indices were excluded as one should expect better estimates for them. As an example, it is easy to check that
$$
\| V \|_{\R^M} \, \leq \, Ch^{-1/2} \big\| \mathbb{V}^h \big\|_{L^2(\partial \Omega)} \qquad {\rm for} \ {\rm all} \ V \in \R^M.
$$
On the other hand, the Sobolev norms corresponding to $s\geq 1/2$ are not finite for $\mathbb{V}^h$ unless $V = 0$.
\end{remark}

\section{Reconstruction algorithm}
\label{sec:inverse}

In this section we formulate an iterative reconstruction algorithm that is a basic adaptation of the Gauss--Newton iteration to the minimization of a Tikhonov-type objective functional arising from the Bayesian inversion paradigm \cite{Kaipio05}. For a closely related implementation, see~\cite{Darde13a}, where the estimation of the object boundary is also included in the algorithm. In what follows, all conductivities are assumed to be isotropic.

Let 
\begin{equation}
\label{data}
{\bf V} = \big[(V^{(1)})^{\rm T},\ldots, (V^{(M-1)})^{\rm T}\big]^\T\in \R^{M(M-1)}
\end{equation}
denote a noisy set of measurements where the electrode voltages $\{V^{(j)}\}_{j=1}^{M-1}\subset \R_\diamond^M$ correspond to a basis of net currents $\{I^{(j)}\}_{j=1}^{M-1}\subset \R_\diamond^M$ injected through the electrodes $\{E_m\}_{m=1}^M$ attached to $\partial\Omega$. Given ${\bf V}$, our aim is to simultaneously estimate the conductivity distribution, the contact resistances, and the electrode locations in a prescribed reconstruction domain $D$, which may (or may not) differ from the true target domain $\Omega$. We assume that the positions and shapes of the electrodes can be parametrized with a finite vector of shape variables denoted by $\tilde{e}$; for explicit examples, see Appendix~\ref{sec:App}. In what follows, the conductivity $\tilde{\sigma}$ in the reconstruction domain $D$ is assumed to have been discretized beforehand. In the numerical examples of Section~\ref{sec:numerics}, this is achieved by resorting to piecewise linear representations in FE bases. In particular, all further references to $\tilde{\sigma}$ are to be understood in the finite-dimensional Euclidean sense.
 
Assuming an additive zero-mean Gaussian noise model and Gaussian priors for the unknowns, determining a MAP estimate for the parameters of interest corresponds~\cite{Darde13a} to finding a minimizer for the objective function
\begin{equation}\label{eq:tikhfun}
F(\tilde{\sigma}, \tilde{z}, \tilde{e}) \, = \, \big\|\tilde{\bf U}(\tilde{\sigma}, \tilde{z},\tilde{e})-{\bf V}\big\|_{\Gamma^{-1}_{0}}^2 + \|\tilde{\sigma}-\tilde{\sigma}^\mu \|_{\Gamma^{-1}_1}^2 +
\|\tilde{z}-\tilde{z}^\mu \|_{\Gamma^{-1}_2}^2  + \|\tilde{e}-\tilde{e}^\mu \|_{\Gamma^{-1}_3}^2
\end{equation}
where we have used the notation $\|x\|_A^2 := x^\T A x$. In \eqref{eq:tikhfun}, $\tilde{\bf U}(\tilde{\sigma}, \tilde{z},\tilde{e}) \in \R^{M(M-1)}$ is the forward solution evaluated in the reconstruction domain $D$ for the (discretized) conductivity $\tilde{\sigma}$, the contact resistances $\tilde{z}$ and the electrode parameters $\tilde{e}$ (cf.~\eqref{data}).
The covariance matrix of the zero-mean Gaussian noise is denoted by $\Gamma_{0}$. In addition, $\tilde{\sigma}^\mu$, $\tilde{z}^\mu$, $\tilde{e}^\mu$ and the symmetric positive definite matrices $\Gamma_1, \Gamma_2$, $\Gamma_3$ are the mean values and the covariances, respectively, of the underlying prior probability distributions for the to-be-estimated variables.

The functional $F$ can be iteratively minimized using an adaptation of the Gauss-Newton algorithm, which requires evaluating the Jacobian matrices of the forward map with respect to the parameters of interest. We refer to~\cite{Kaipio00,Vilhunen02} for a detailed description of the numerical approximation of the Jacobian matrices $J_{\tilde{\sigma}}$ and $J_{\tilde{z}}$ of $\tilde{\mathbf{U}}$ with respect to $\tilde{\sigma}$ and $\tilde{z}$, respectively. On the other hand, the Jacobian matrix $J_{\tilde{e}}$ of $\tilde{\bf U}$ with respect to the electrode parameters is computed with the help of the sampling formula~\eqref{eq:sampling}; the details are given in Section~\ref{subsec:comp}. We introduce the total Jacobian  ${\bf J} = [J_{\tilde{\sigma}}, J_{\tilde{z}}, J_{\tilde{e}}]$ which is a matrix-valued function of the variable triplet $(\tilde{\sigma},\tilde{z},\tilde{e})$. A generic description of the full algorithm is as follows:
\begin{algorithm} \label{alg:1}
Assume the covariance matrices $\Gamma_{0}, \Gamma_1, \Gamma_2, \Gamma_3$ and the expected values $\tilde{z}^\mu, \tilde{e}^\mu$ are given. Compute  Cholesky factorizations $L^\T_i L_i = \Gamma^{-1}_i$ for all $i = {0}, 1, 2, 3$ and build the block-diagonal matrix ${\bf L} = \diag (L_1, L_2, L_3)$. Choose the prior mean of the conductivity to be the homogeneous estimate $\tilde{\sigma}^\mu = \tau_{\rm min}{\bf 1}$ where
$$
\tau_{\rm min} = \underset{\tau \in \R_+}{\arg \min} \,\big| L_{0} \big(\tilde{\bf U} (\tau{\bf 1},\tilde{z}^\mu,\tilde{e}^\mu)- {\bf V}\big) \big|^2,
$$
and ${\bf 1} = [1, \ldots, 1]^\T$ is a constant vector of the appropriate length. Initialize with the compound variable
$$
{\bf b}^{(0)} =
\begin{bmatrix}
\tilde{\sigma}^\mu\\
\tilde{z}^\mu\\
\tilde{e}^\mu
\end{bmatrix}.
$$
While the cost functional $F({\bf b}^{(j)})$ decreases sufficiently, iterate for $j = 0, 1, 2, \ldots$:
\begin{enumerate}
\item Form
\begin{align*}
{\bf A} = \begin{bmatrix}
L_{0} {\bf J}({\bf b}^{(j)}) \\ {\bf L}
\end{bmatrix}
\qquad \text{and} \qquad
{\bf y} = \begin{bmatrix}
L_{0} \big(\tilde{\bf U}({\bf b}^{(j)})-{\bf V}\big) \\ {\bf L} ({\bf b}^{(j)} - {\bf b}^{(0)})
\end{bmatrix}.
\end{align*}
\item Solve the direction $\Delta{\bf b}$ from 
$$
\Delta{\bf b} = \underset{{\bf x}}{\arg\min} \, |{\bf A}{\bf x} - {\bf y}|^2.
$$
\item Set ${\bf b}^{(j+1)} = {\bf b}^{(j)} - q \Delta{\bf b}$, where the step size $q > 0$ is chosen by a line search.
\end{enumerate}
\end{algorithm}

\vspace{2mm}

\begin{remark}\label{rem:init-z}
In the initialization phase of Algorithm~\ref{alg:1}, we could simultaneously compute a homogeneous estimate for the contact resistances and use it as the corresponding prior mean. However, we have instead chosen to employ artificially large, user-specified  prior means in our numerical examples, as high contact resistances yield better numerical stability~\cite{Darde16}. See \cite{Darde13b} for a similar approach.
\end{remark}

\subsection{Computation of the electrode derivative}\label{subsec:comp}
Let $\tilde{U} \in \R^M$ be the potentials at the electrodes on $\partial D$ corresponding to a current pattern $I \in \{I^{(j)}\}_{j=1}^{M-1}\subset \R_\diamond^M$ and some given values for $\tilde{\sigma}$, $\tilde{z}$ and $\tilde{e}$. We consider how to compute the derivative of $\tilde{U}$ with respect to an arbitrary component $\tilde{e}_k$ of $\tilde{e}$. We assume that $\tilde{e}_k$ only affects the shape and/or position of a single electrode, say, $\tilde{E}_m$, and only consider the case $n=3$, which is arguably the more challenging one. 

Without loss of generality, we may assume that the aim is to evaluate the derivative with respect to $\tilde{e}_k \in \R$ at the origin.
Keeping the other components of $\tilde{e}$ fixed, suppose that $\partial\tilde{E}_m = \partial\tilde{E}_m(e_k)$ can be parametrized in an open neighborhood of the origin by $\tilde{\gamma}_m(\xi,\tilde{e}_k)$, with $\xi \in [0,2\pi)$ being a path variable. Assume that $\tilde{\gamma}_m$ is twice continuously differentiable and set
$$
a \big(\tilde{\gamma}_m(\xi,0)\big) \, := \, \frac{\partial \tilde{\gamma}_m}{\partial \tilde{e}_k }(\xi,0), \qquad \xi \in [0,2\pi),
$$
and $a\equiv 0$ on $\partial E_l$ for $l\not= m$. Obviously, for a small enough $\eps >0$,
\begin{equation}\label{eq:ae}
\tilde{\gamma}_m(\xi,\eps) - \tilde{\gamma}_m(\xi,0) \, = \, a \big(\tilde{\gamma}_m(\xi,0)\big) \eps + O(\eps^2),  \qquad \xi \in [0,2\pi).
\end{equation} 
Recalling \eqref{eq:diffquo} and using \eqref{eq:ae}, it is not difficult to deduce that
\begin{equation}\label{eq:dU}
\frac{\partial \tilde{U}}{\partial \tilde{e}_k}\Big|_{\tilde{e}_k=0}  = \, \tilde{U}'(I,0) a.
\end{equation}
By computing the right-hand side of \eqref{eq:dU} for all $I \in \{I^{(j)}\}_{j=1}^{M-1}$ with the help of \eqref{eq:sampling}, one thus obtains the $k$th column of $J_{\tilde{e}}$ (cf.~\eqref{data}).

In the numerical examples of Section~\ref{sec:numerics}, we consider two choices for $D$: a two-dimensional disk and a three-dimensional right circular cylinder. The corresponding parametric formulas that enable the numerical implementation of shape differentiation are documented in Appendix~\ref{sec:App}. However, we emphasize that the global closed-form parametrizability of $\partial D$ and $\partial\tilde{E}$ is not an indispensable requirement. For example, local parametrizations using,~e.g.,~splines provide a flexible computational framework for generalizing the described method to (almost) arbitrary geometries.

\section{Numerical experiments}
\label{sec:numerics}
We consider three numerical examples that investigate the performance of Algorithm~\ref{alg:1} from different perspectives. The leading idea is to test whether a simultaneous reconstruction of various parameters yields better conductivity images than a naive approach that ignores the geometric mismodeling between the reconstruction and target domains. In Example~1, the measurement data is simulated in the unit square whereas the reconstruction is computed in the unit disk, which corresponds to a significant modeling error. Example~2 applies Algorithm~\ref{alg:1} to experimental data from a thorax-shaped water tank containing different configurations of embedded inclusions. Since the target is homogeneous in the vertical direction, the geometry is essentially two-dimensional; the reconstruction is formed in a disk that has the same circumference as the tank.  
To conclude, Example~3 investigates how Algorithm~\ref{alg:1} performs in an inherently three-dimensional geometry. 

All computations are performed using a {\em finite element method} (FEM) with $\mathbb{P}_1$ (piecewise linear) basis functions \cite{Kaipio00, Vauhkonen97, Vilhunen02}. In particular, both the conductivity and the internal electric potential $u$ are discretized in terms of $\mathbb{P}_1$. Because the model for the measurement geometry changes at each iteration step during a simultaneous reconstruction of $\tilde{\sigma}$, $\tilde{z}$ and $\tilde{e}$, which also induces changes in the FE meshes, the conductivity values are stored in a static `reference mesh' via interpolation.

\subsection*{{\it Example~1: Simulations in a square, reconstructions in a disk}}\label{ex:1}
In this example, the target domain $\Omega$ is the unit square $[-1,1]\times[-1,1]$ and the reconstruction domain $D$ is the unit disk. There are three electrodes of width $0.25$ attached to each side of the square, i.e., there are altogether $M=12$ electrodes (cf.~Figure~\ref{sfig:t1-target}). The target conductivity illustrated in Figure~\ref{sfig:t1-target} consists of two inclusions lying in a homogeneous background. As a reference, we also show the target configuration mapped onto the unit disk by the conformal $\Phi\colon \Omega \to  D$ that keeps the coordinate axes fixed; take note that this obviously is not the only possible choice for $\Phi$ and also recall that we denote $\Psi = \Phi^{-1}$. The synthetic contact resistances are drawn form a normal distribution,~$z_m^{\rm trgt} \sim \mathcal{N}(0.1,0.01^2)$, and the transformed ones are calculated as $\tilde{z}_m^{\rm trgt} = |\Phi' (y_m)|\, z_m^{\rm trgt}$, $m=1, \dots, M$,
where $y_m$ is the center point of the $m$th electrode on $\partial \Omega$. To simulate the data, $\Omega$ is discretized into a triangular mesh consisting of $3.3 \cdot 10^4$ nodes and $6.4 \cdot 10^4$ elements with suitable refinements around the electrodes.

The potential measurements ${\bf V} \in \R^{M(M-1)}$ are synthesized by corrupting the FEM-approximated electrode potentials ${\bf U} \in \R^{M(M-1)}$ with additive zero-mean Gaussian noise with the covariance matrix
\begin{align}\label{noise-cov}
\Gamma_{0} = \Big( \eta_{0} \max_{i,j} |{\bf U}_i - {\bf U}_j| \Big)^2 \! \id \, \in \R^{M(M-1) \times M(M-1)},
\end{align}
where 
$\eta_{0} = 10^{-3}$. The prior covariance matrix for the conductivity consists of the entries (cf.,~e.g.,~\cite{Lieberman2010})
\begin{align}\label{prior}
\Gamma_1^{(ij)} = \eta_1^2 \exp \left(-\frac{|x_i-x_j|^2}{2\lambda^2} \right) 
\end{align}
where the pointwise variance is $\eta_1^2 = 0.5$, the correlation length $\lambda = 1$, and $x_i, x_j \in D \subset \R^2$ are the coordinates of the mesh nodes corresponding to the coefficients $\tilde{\sigma}_i, \tilde{\sigma}_j \in \R_+$, respectively.
The prior mean and the covariance matrix for the contact resistances are 
%% NH:
$\tilde{z}^\mu = {\bf 1} \in \R^M$ and $\Gamma_2 = \eta_2^2 \,\id \in \R^{M \times M}$ with $\eta_2=10$, respectively; see Remark~\ref{rem:init-z}.

The results of our first test are documented in Figure~\ref{sfig:t1-cmapel}.
The electrodes are fixed to the `correctly mapped' positions $\tilde{E}_m = \Phi(E_m)$, and only the conductivity and the contact resistances are reconstructed. Algorithm~\ref{alg:1} initializes with the homogeneous estimate 
%% NH:
$\tilde{\sigma}^\mu = 0.87$. Figure~\ref{sfig:t1-cmapel} shows the final reconstruction in the disk $D$ and as a $\Psi$-mapped version in the original (but in practice unknown) square domain~$\Omega$. In addition, the reconstructed contact resistances are compared with the target values~$\tilde{z}^{\rm trgt}$. According to Figure~\ref{sfig:t1-cmapel}, it is possible to form a reasonable reconstruction of the conductivity in the mismodeled domain $D$ by using the conformally mapped electrodes. Moreover, the reconstructed contact resistances seem to mimic $\tilde{z}^{\rm trgt}$ as predicted by Theorem~\ref{paalause}.

\begin{figure}
\begin{center}
\subfloat[Left: the target conductivity and electrodes in the original domain $\Omega$. Middle: the target conductivity and electrodes conformally mapped onto the reconstruction domain $D$. Right: the contact resistances $z^{\rm trgt}$ (filled dots) and the transformed ones $\tilde{z}^{\rm trgt}$ (hollow dots).]{\label{sfig:t1-target}
\includegraphics[width=0.29\textwidth]{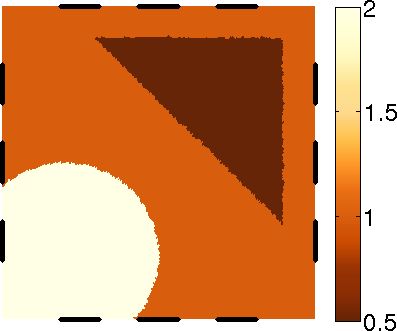} \quad
\includegraphics[width=0.29\textwidth]{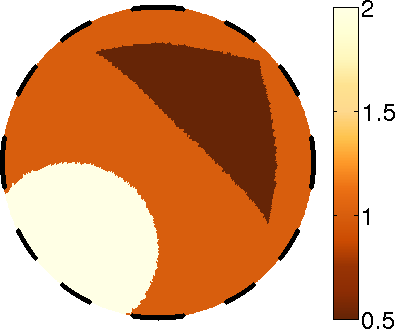}
\quad
\includegraphics[width=0.34\textwidth]{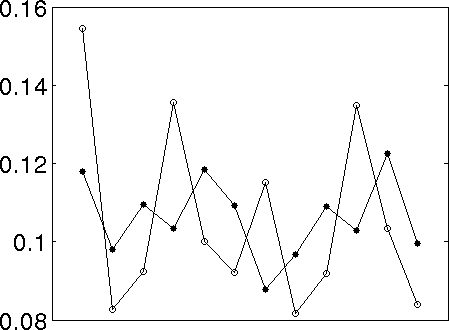}}\\[4mm]
\subfloat[Reconstruction with conformally mapped electrodes.]{\label{sfig:t1-cmapel}
\includegraphics[width=0.29\textwidth]{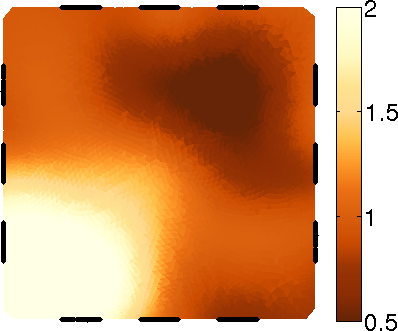} \quad
\includegraphics[width=0.29\textwidth]{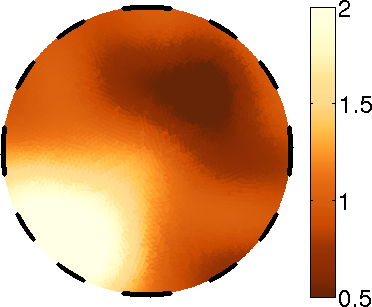}
\quad
\includegraphics[width=0.34\textwidth]{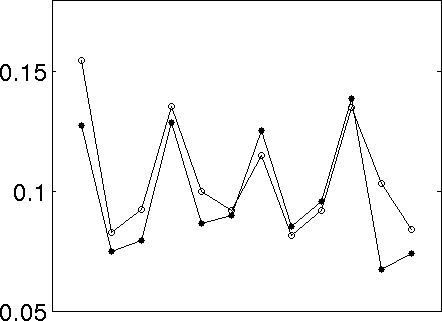}}\\[4mm]
\subfloat[Reconstruction with fixed equally spaced electrodes.]{\label{sfig:t1-fixel}
\includegraphics[width=0.29\textwidth]{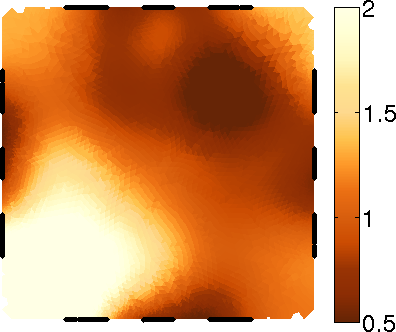} \quad
\includegraphics[width=0.29\textwidth]{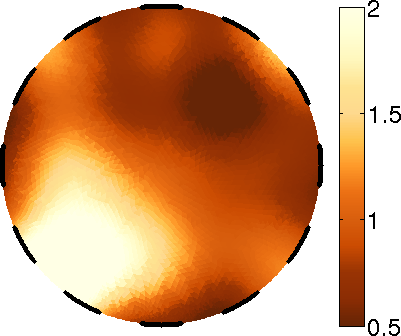}
\quad
\includegraphics[width=0.34\textwidth]{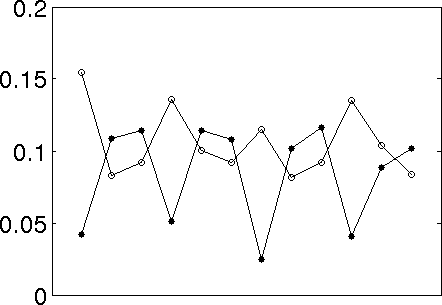}}\\[4mm]
\subfloat[Simultaneous reconstruction of all parameters.]{\label{sfig:t1-freeel}
\includegraphics[width=0.29\textwidth]{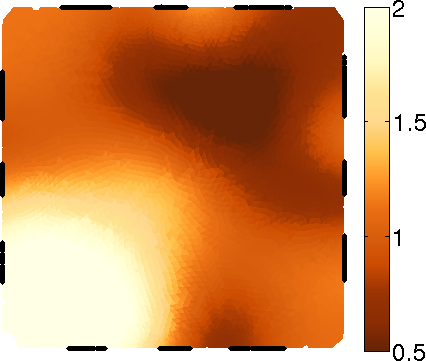} \quad
\includegraphics[width=0.29\textwidth]{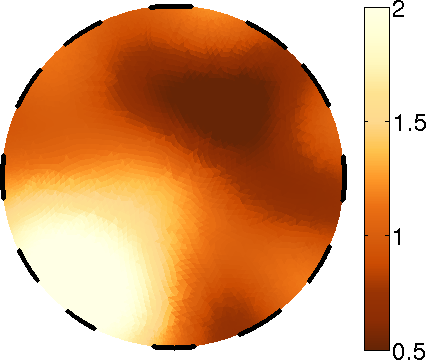}
\quad
\includegraphics[width=0.34\textwidth]{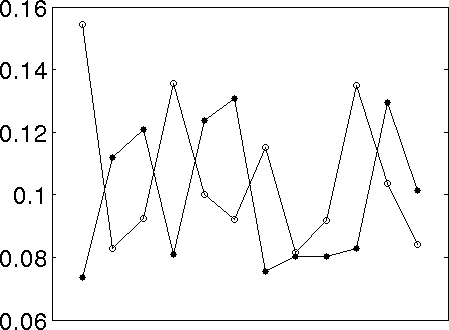}}\\[4mm]
\subfloat[%Reconstruction of the conductivity and the electrode parameters when the contact resistances are fixed to $\tilde{z}^{\rm trgt}$.
Reconstruction with contact resistances fixed to $\tilde{z}^{\rm trgt}$.
]{\label{sfig:t1-fixz}
\includegraphics[width=0.29\textwidth]{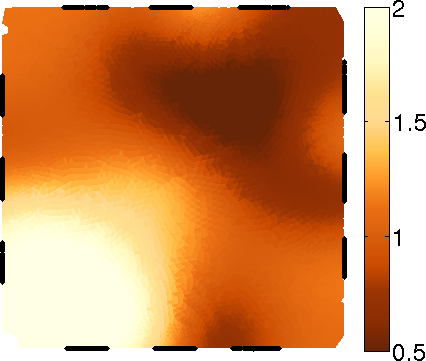} \quad
\includegraphics[width=0.29\textwidth]{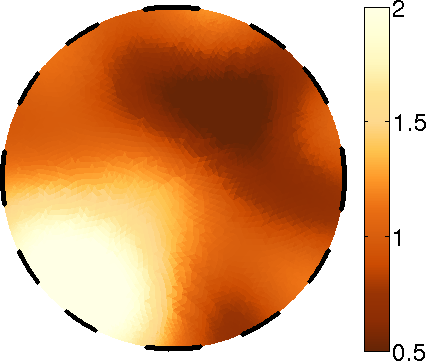} \quad
\includegraphics[width=0.34\textwidth]{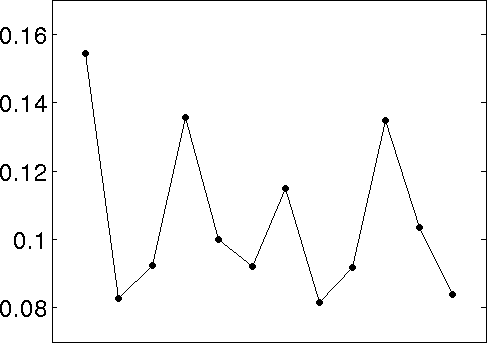}}
\caption{\label{fig:test1} 
{\it Example~1}. All reconstructions are computed in the unit disk (middle column) and conformally mapped to the original domain (left column). Apart from the top right image, the reconstructed contact resistances are plotted with filled dots and $\tilde{z}^{\rm trgt}$ with hollow dots (right column). The values correspond to the electrodes in counter-clockwise order starting from the rightmost one.
}
\end{center}
\end{figure}

The second test considers a more realistic set-up where the correctly mapped electrodes are not utilized. However, we still use fixed electrodes $\tilde{E}_m(\theta^\mu_m, \alpha^\mu_m)$, $m=1, \dots, M$, with the central angles $\theta^\mu \in \R^M$ equally spaced around $\partial D$ and the radii/half-widths $\alpha^\mu_m = 0.125$, $m=1, \dots, M$, in accordance with the size of the true electrodes on $\partial \Omega$; see Appendix~\ref{sec:App} for the details about the parametrization of the electrodes. The algorithm starts at the homogeneous estimate 
%% NH:
$\tilde{\sigma}^\mu = 0.95$ and the output reconstruction is presented in Figure~\ref{sfig:t1-fixel}. The effect of the target inclusions is still visible, but a lot of information is lost compared to the reconstructions in Figure~\ref{sfig:t1-cmapel}. Moreover, the estimated contact resistances do not resemble $\tilde{z}^{\rm trgt}$.

Next, we employ the full version of Algorithm~\ref{alg:1}, i.e., we also reconstruct the central angles $\theta$ and radii $\alpha$ of the electrodes. The electrode shape variable $\tilde{e}$ defined in~\eqref{eq:e1} is given the prior covariance $\Gamma_3 = \eta_3^2 \, \id \in \R^{2M \times 2M}$, where $\eta_3=0.125$. As the prior mean $\tilde{e}^\mu$ we use the parameter values from the previous test, that is, equally-spaced electrodes of radius $0.125$. The results are presented in Figure~\ref{sfig:t1-freeel}. The reconstruction of the conductivity is improved compared to the previous test; in fact, it is comparable to the one with correctly mapped electrodes presented in Figure~\ref{sfig:t1-cmapel}. However, the reconstructed electrodes do not coincide with the conformally mapped ones and the reconstructed contact resistances still do not agree with $\tilde{z}^{\rm trgt}$. Our hypothesis is that the effects of increasing the electrode widths and decreasing the contact resistances are so similar that the algorithm does not distinguish them: 
%the reconstructed contact resistances are larger than the corresponding components of $\tilde{z}^{\rm trgt}$ at the electrodes that are larger than the corresponding conformally mapped ones and vice versa.
On `too large' electrodes, instead of decreasing their size, the algorithm reconstructs higher contact resistances than the corresponding components of $\tilde{z}^{\rm trgt}$. Similarly we get low contact resistances for `too small' electrodes.

In order to verify the aforementioned hypothesis, we conclude this first example by considering the simultaneous reconstruction of the conductivity and the electrode parameters when the contact resistances are (unrealistically) fixed to the target values $\tilde{z}^{\rm trgt}$. The results are illustrated in Figure~\ref{sfig:t1-fixz}. In this case, the reconstructed electrodes end up near the conformally mapped ones and, when mapped to $\partial \Omega$ by $\Psi$, they almost coincide with the original electrodes. However, we also observe that fixing the contact resistances to the values $\tilde{z}^{\rm trgt}$ has practically no effect on the reconstruction of the conductivity in comparison to Figure~\ref{sfig:t1-freeel}.

The computations related to the reconstructions of this example were performed on FE meshes having around $8\cdot 10^3$ nodes and $1.5 \cdot 10^4$ elements with appropriate refinements near the electrodes. In the final two examples where the electrodes were allowed to move, a new FE mesh was created at each iteration of Algorithm~\ref{alg:1}. In these cases, the intermediate conductivity reconstructions were stored in a static reference mesh with around $10^4$ nodes and $2 \cdot 10^4$ triangles. The conformal maps $\Phi$ and $\Psi$ were constructed with the Schwarz--Christoffel Toolbox for MATLAB~\cite{Driscoll96}.

\subsection*{{\it Example~2: Two-dimensional real-life data}}
This test applies Algorithm~\ref{alg:1} to experimental data measured on a thorax-shaped cylindrical water tank with cross-sectional circumference of $106$\,cm. There are $M = 16$ rectangular metallic electrodes of width $2$\,cm attached to the internal lateral surface of the tank. The electrode height is $5$\,cm, which equals the water depth as well as the height of the inclusions placed inside the tank. In particular, as the target is homogeneous in the vertical direction and no current flows through the top or the bottom of the water layer, one can model the measurements by the two-dimensional CEM; see,~e.g.,~\cite{Gehre12} for more details. Three target configurations with either one or two embedded inclusions are considered; see~Figure~\ref{fig:test2}. The measurements were performed with the {\em Kuopio impedance tomography} (KIT4) device at the University of Eastern Finland using low-frequency ($1$\,kHz) alternating current~\cite{Kourunen09}. The phase information of the measurements is not used, but the amplitudes of currents and potentials are interpreted as real numbers. In what follows, the units of distance, conductivity and contact resistance are cm, mS/cm and k$\Omega\, {\rm cm}^2$, respectively.

\begin{figure}
\begin{center}
\subfloat[One hollow steel cylinder with a rectangular cross section.]{\label{sfig:t2-case1}
\includegraphics[width=0.3\textwidth]{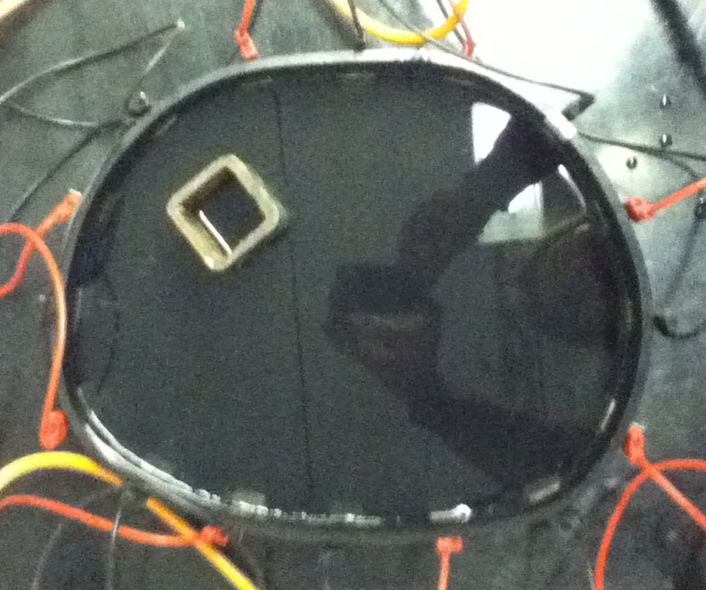} \quad
\includegraphics[width=0.3\textwidth]{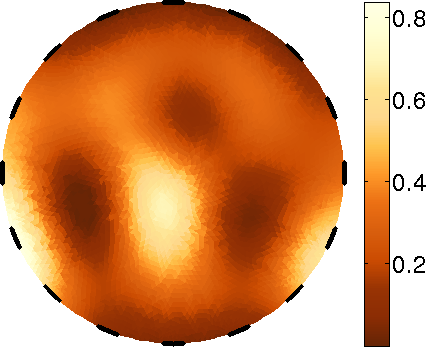}
\quad
\includegraphics[width=0.3\textwidth]{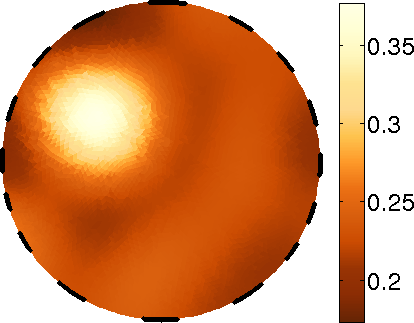}}\\[4mm]
\subfloat[One hollow steel cylinder with a rectangular cross section and one plastic cylinder with a round cross section.]{\label{sfig:t2-case2}
\includegraphics[width=0.3\textwidth]{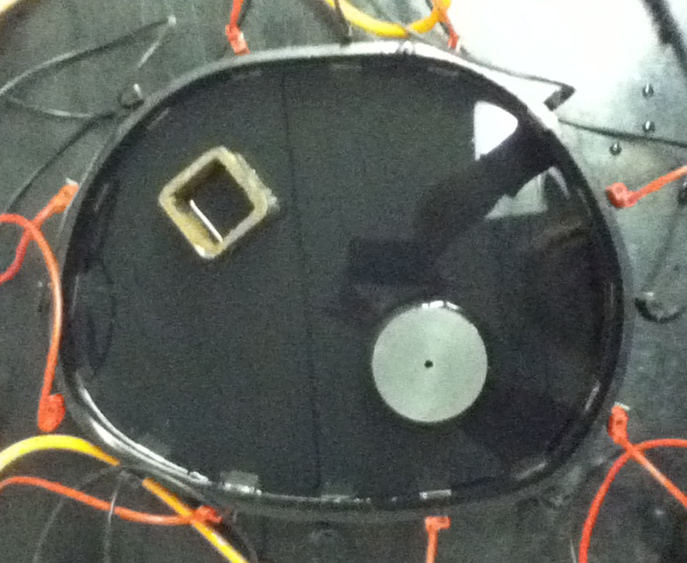} \quad
\includegraphics[width=0.3\textwidth]{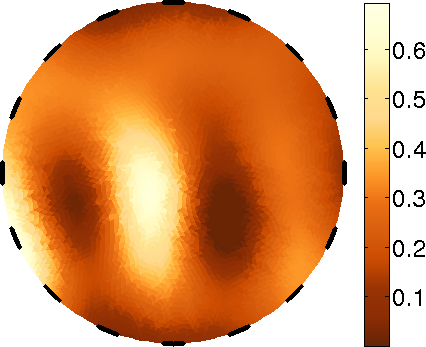}
\quad
\includegraphics[width=0.3\textwidth]{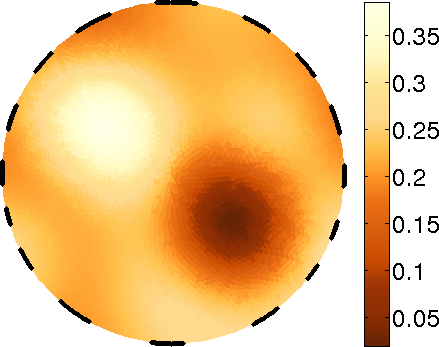}}\\[4mm]
\subfloat[One plastic cylinder with a round cross section and one hollow steel cylinder with a rectangular cross section.]{\label{sfig:t2-case3}
\includegraphics[width=0.3\textwidth]{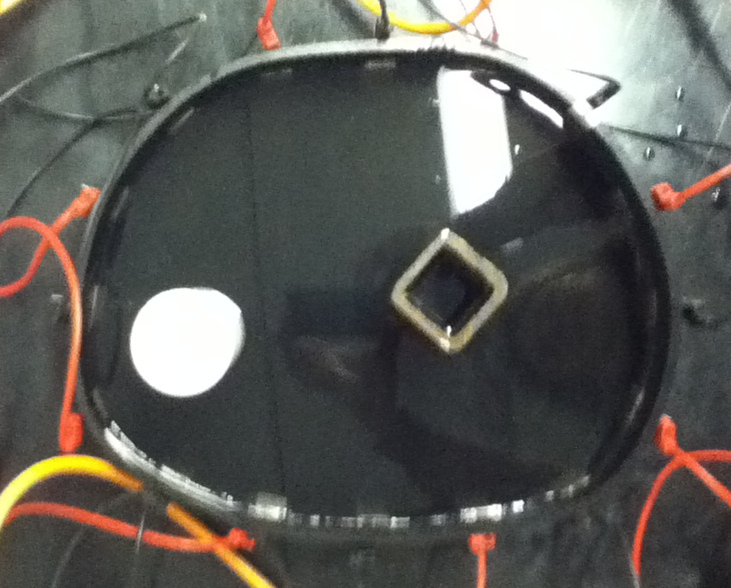} \quad
\includegraphics[width=0.3\textwidth]{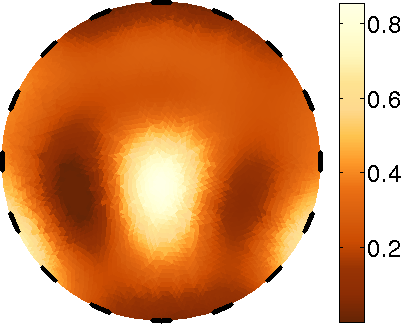}
\quad
\includegraphics[width=0.3\textwidth]{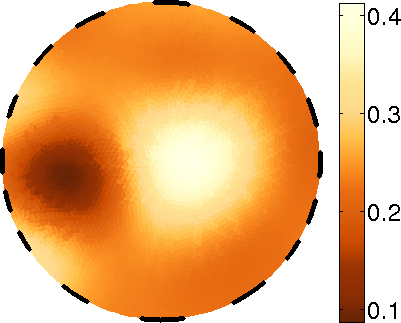}}
\caption{\label{fig:test2}
{\it Example~2}. Left column: the measurement configurations. Middle column: the reconstructions with fixed equally spaced electrodes of the correct width. Right column: the reconstructions produced by the full Algorithm~\ref{alg:1}. The unit of conductivity is mS/cm.
}
\end{center}
\end{figure}

The reconstruction algorithm is run in the origin-centered disk $D = B(0,r)$ of radius $r = 106/(2\pi)$. The noise covariance $\Gamma_{0}$ is of the form \eqref{noise-cov} with $\eta_{0} = 10^{-2}$ and ${\bf U}$ replaced by the measured potential data ${\bf V}$. The prior covariance matrices are of the same form as in Example~1; the parameter values are chosen to be $\lambda = r$  and $\eta_1^2 = 0.5$ in \eqref{prior}, and $\eta_2 = 10$ and $\eta_3 = 1$ for the covariances of $\tilde{z}$ and $\tilde{e}$, respectively. The corresponding mean values also mimic those in Example~1: 
%% NH:
$\tilde{z}^\mu = {\bf 1} \in \R^M$, the central angles of the electrodes $\theta^\mu \in \R^M$ are equally spaced, and the
%% NH:
angular electrode radii are $\alpha^\mu = 1/r \cdot {\bf 1} \in \R^M$.

For each target configuration shown in the left-hand column of Figure~\ref{fig:test2}, we run the reconstruction algorithm with two different presets: first with the fixed electrodes $\tilde{E}_m(\theta^\mu_m,\alpha^\mu_m)$, $m=1, \dots, M$, and then employing the full Algorithm~\ref{alg:1},~i.e.,~simultaneously reconstructing the conductivity, the contact resistances and the electrode parameters. The results are presented in Figure~\ref{fig:test2}. With fixed electrodes, the conductivity reconstruction is poor. On the other hand, when the electrode locations and widths are estimated as a part of the algorithm, the reconstruction of the conductivity is free from significant artifacts and it accurately reproduces the qualitative properties of the target, such as the number and the approximate locations of the inclusions. We stress that the reconstructions are qualitatively comparable to those computed in an accurately modeled domain~\cite{Darde13b}.

The initial homogeneous estimates for the conductivity were 
%% NH:
$\tilde{\sigma}^\mu = 0.23$, $\tilde{\sigma}^\mu = 0.21$ and $\tilde{\sigma}^\mu = 0.24$ in the configurations of Figure~\ref{sfig:t2-case1}, Figure~\ref{sfig:t2-case2} and Figure~\ref{sfig:t2-case3}, respectively. The computations related to the reconstructions of this example were performed on FE meshes having around $7\cdot 10^3$ nodes and $1.3 \cdot 10^4$ triangles with appropriate refinements near the electrodes. When the estimation of the electrode parameters was included in the algorithm, the intermediate conductivity reconstructions were stored on a static reference mesh with around $1.1 \cdot 10^4$ nodes and $2.1 \cdot 10^4$ triangles.

\subsection*{{\it Example~3: Three-dimensional cylinder}}

In the final numerical experiment, we investigate how the introduced reconstruction technique performs in three spatial dimensions. We start with a simple `nearly two-dimensional' test where the target conductivity distribution is vertically homogeneous. The domain $\Omega$ is a cylinder $\Omega_0 \times (0,h)$ with height $h = 0.5$ and $\partial\Omega_0$ parametrized by 
$$
\gamma(\theta) \, = \, \left( \frac{3}{\sqrt{1.5^2 \cos^2 \theta + 2^2 \sin^2 \theta}} + 0.75e^{-(\theta-\pi)^6} + 0.6 \cos \theta \sin(-2\theta) \right) \begin{bmatrix}
\cos \theta \\ \sin \theta
\end{bmatrix}
$$
where $\theta \in [0,2\pi)$. On the lateral surface $\partial\Omega_0\times (0,h)$ there are $M = 16$ circular electrodes of radius $0.15$ centered at height $h/2$ and angular positions $2\pi(m-1)/M$, $m=1, \dots, M$. The simulation domain $\Omega$ and the target conductivity are visualized in the leftmost column of Figure~\ref{fig:test3a}. The target contact resistances are drawn from $\mathcal{N}(0.1,0.01^2)$ as in Example~1.

\begin{figure}
\begin{center}
\includegraphics[width=0.3\textwidth]{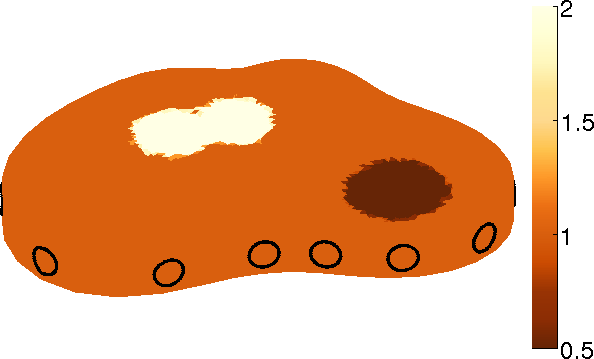} \quad
\includegraphics[width=0.3\textwidth]{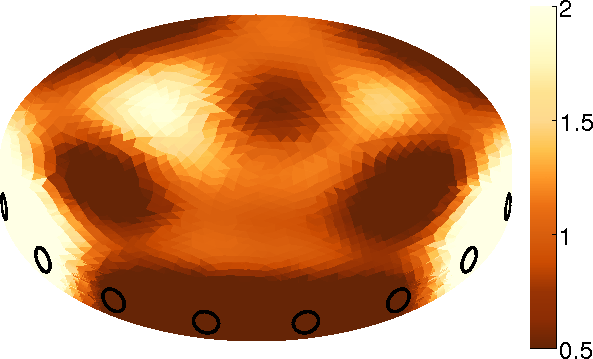}
\quad
\includegraphics[width=0.3\textwidth]{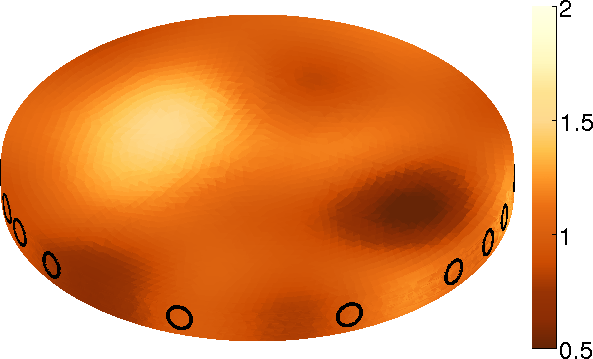}\\[4mm]
$\vcenter{\hbox{\includegraphics[width=0.3\textwidth]{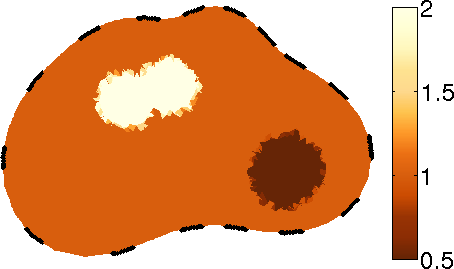}}}$ \quad
$\vcenter{\hbox{\includegraphics[width=0.3\textwidth]{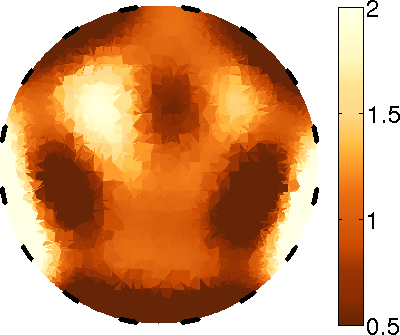}}}$
\quad
$\vcenter{\hbox{\includegraphics[width=0.3\textwidth]{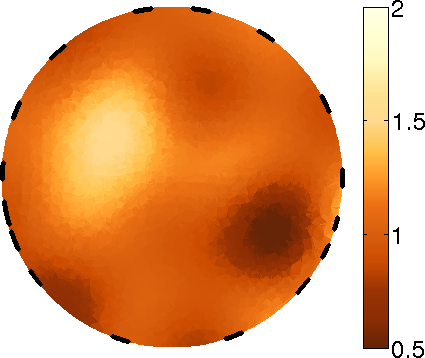}}}$
\caption{\label{fig:test3a}
{\it Example~3, one electrode belt}. Left column: the measurement configuration. Middle column: the reconstruction with fixed equally spaced electrodes of the correct shape. Right column: the reconstruction produced by the full Algorithm 1. Top row: three-dimensional images. Bottom row: slices at height $h/2$.
}
\end{center}
\end{figure}

The measurements ${\bf V} \in \R^{M(M-1)}$ are once again synthesized by corrupting the FEM-approximated electrode potentials $U^{(j)}$, $j=1, \dots, M-1$,
by additive Gaussian noise. To be more precise, this time around $V_m^{(j)} = U_m^{(j)} + W_m^{(j)}$, with $W_m^{(j)} \sim \mathcal{N}(0,\eps_m^{(j)})$ and
\begin{equation}\label{noise-var}
\eps_m^{(j)} \, = \, 0.01^2 |U^{(j)}_m|^2 + 0.001^2 \max_{1 \leq n,p \leq M} |U_n^{(j)}-U_p^{(j)}|^2,
\end{equation}
where $m=1, \dots, M$ and $j=1,\dots,M-1$. 
The reconstruction algorithm is run in the domain $D$ which is a right circular cylinder with height $h = 0.5$ and radius $r = 3$. We use the target electrode parameters as the corresponding prior means,~i.e.,~$\theta^\mu_m = 2\pi(m-1)/M$, $\zeta^\mu_m = h/2$ and $\ell^\mu_m = k^\mu_m = 0.15$ for $m=1, \dots, M$. For more information on the parametrization of the geometry, see Appendix~\ref{sec:App}. Moreover, 
%% NH:
$\tilde{z}^\mu = {\bf 1} \in \R^M$. The prior covariances are selected as in Examples~1 and 2, that is, $\Gamma_1$ is defined by \eqref{prior} with $\lambda = r$ and $\eta_1^2 = 0.5$, $\Gamma_2 = \eta_2^2\, \id \in \R^{M \times M}$ and $\Gamma_3 = \eta_3^2 \, \id \in \R^{4M \times 4M}$ with $\eta_2 = 10$ and $\eta_3 = 0.15$.

As in Example~2, we perform a visual comparison between a reconstruction computed with the electrode parameters fixed to $\theta^\mu$, $\zeta^\mu$, $\ell^\mu$, $k^\mu$ and a reconstruction produced by Algorithm~\ref{alg:1} in its complete form. The algorithm initializes with the homogeneous conductivity 
%% NH:
$\tilde{\sigma}^\mu = 0.98$. The final reconstructions are presented in Figure~\ref{fig:test3a}. The simultaneous retrieval of the electrode parameters clearly improves the reconstruction when compared to the fixed-electrode approach. 

\begin{figure}
\begin{center}
\includegraphics[width=0.3\textwidth]{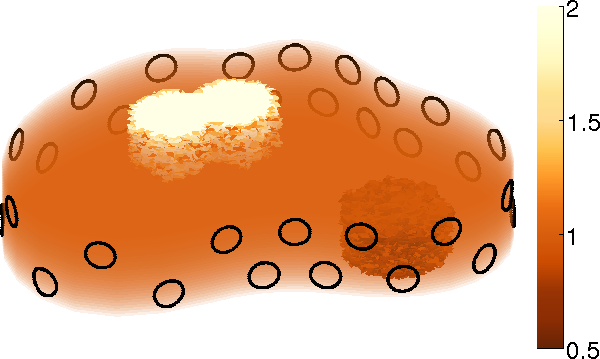} \quad
\includegraphics[width=0.3\textwidth]{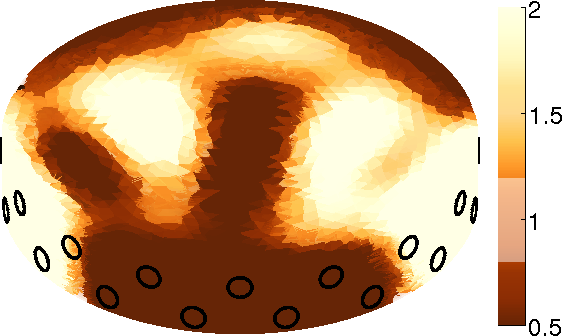}
\quad
\includegraphics[width=0.3\textwidth]{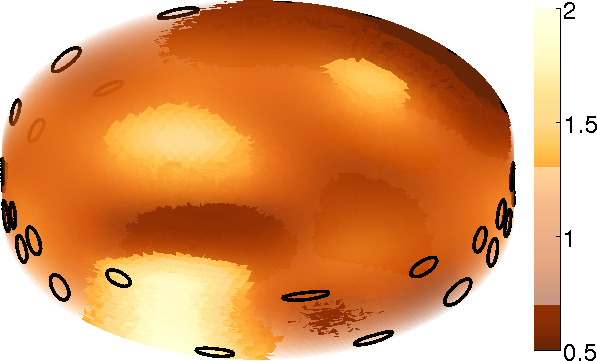}\\[4mm]
$\vcenter{\hbox{\includegraphics[width=0.25\textwidth]{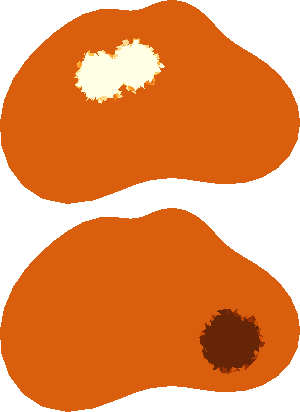}}}$ \qquad \quad
$\vcenter{\hbox{\includegraphics[width=0.25\textwidth]{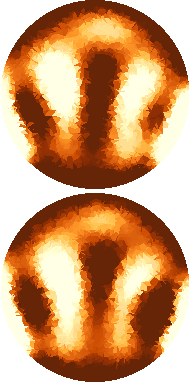}}}$
\qquad \quad
$\vcenter{\hbox{\includegraphics[width=0.25\textwidth]{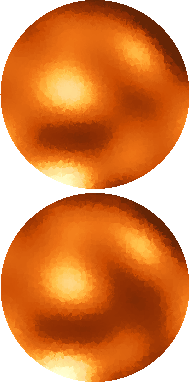}}}$ \qquad 
\caption{\label{fig:test3b}
{\em Example~3, two electrode belts}. Left column: the measurement configuration. Middle column: the reconstruction with fixed equally spaced electrodes of the correct shape. Right column: the reconstruction produced by the full Algorithm 1. Top row: three-dimensional images (the values indicated in the colorbars are transparent). Middle row: slices at height $3h/4$. Bottom row: slices at height $h/4$.
}
\end{center}
\end{figure}

In the second test of this example, we consider a measurement configuration that is genuinely three-dimensional. The target domain $\Omega$ is as in the previous test apart from its height that is increased to $h = 1$. Moreover, this time there are two belts of sixteen electrodes with radii $0.15$ assembled at heights $h/4$ and $3h/4$, respectively ($M=32$). The central angles of the electrodes are $[4\pi(m-1)/M]_{m=1}^{M/2}$ and $[4\pi(m-1)/M + 2\pi/M]_{m=1}^{M/2}$ in the lower and the upper electrode belt, respectively. In addition, the target conductivity is vertically inhomogeneous as the heights of the two inclusions are only $h/2$, with the conductive one touching the top and the insulating one the bottom of $\Omega$; see the left column of Figure~\ref{fig:test3b}.

We choose the reconstruction domain $D$ to be the right circular cylinder with height $h = 1$ and radius $r = 3$. The noise model is the same as in the previous test,~i.e.,~it is in accordance with \eqref{noise-var}. The prior means of the electrode shape variables are set to the corresponding values in the true measurement configuration for~$\Omega$; the prior covariances are as in the first test of this example. In this setting, the algorithm starts with the homogeneous estimate 
%% NH: 
$\tilde{\sigma}^\mu = 0.99$. The final reconstructions for the fixed-electrode case and with the simultaneous reconstruction of all parameters of interest are shown in the middle and the right column of Figure~\ref{fig:test3b}, respectively. As in Examples~1 and~2, including the estimation of the electrode parameters in the algorithm improves the reconstruction, but in this three-dimensional setting the increase in quality is less obvious: the images in the right-hand column of Figure~\ref{fig:test3b} also suffer from significant artifacts. In particular, regions of too high or too low conductivity emerge close to those boundary sections  where the shapes of $\partial \Omega$ and $\partial D$ differ the most. On the positive side, some traits of the vertical inhomogeneity in the target conductivity are also present in the reconstruction.

The reconstructed electrodes for the two tests of this example are visualized in Figure~\ref{fig:test3els}. The gaps between the electrodes seem to correlate with the local geometric modeling errors; see also the slices in the bottom row of Figure~\ref{fig:test3a}. In the case of two electrode belts, the reconstructed electrode heights also vary although the geometric mismodeling is only related to the cross section of the cylinder. It is difficult to find any intuitive patterns in the reconstructed electrode shapes. 
According to our experience, the quality of the reconstruction is also affected by the height of the cylindrical domain: if one chooses $h=1$ in the first test, the conductivity reconstruction produced by the full Algorithm~\ref{alg:1} is clearly worse than the one in Figure~\ref{fig:test3a}.

In the first test, the data was simulated on a FE mesh with $2.6 \cdot 10^4$ nodes and $1.2 \cdot 10^5$ tetrahedra. The reconstruction meshes had around $7 \cdot 10^3$ nodes and $2.5 \cdot 10^4$ tetrahedra, and the reference mesh for storing the conductivity had $1.4 \cdot 10^4$ nodes and $6.5 \cdot 10^4$ tetrahedra.
In the second test, the simulation mesh had $4.6 \cdot 10^4$ nodes and $2.2 \cdot 10^5$ tetrahedra. The reconstruction meshes consisted of about $1.3 \cdot 10^4$ nodes and $5.5 \cdot 10^4$ tetrahedra, and the reference mesh of $1.8 \cdot 10^4$ nodes and $9.5 \cdot 10^4$ tetrahedra.

\begin{figure}
\centering
$\vcenter{\hbox{\includegraphics[width=0.3\textwidth]{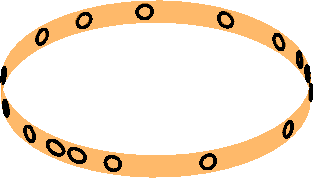}}}$ \quad
$\vcenter{\hbox{\includegraphics[width=0.3\textwidth]{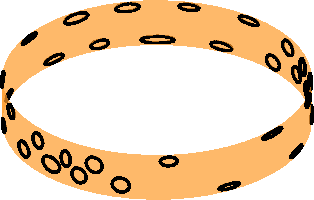}}}$
\caption{\label{fig:test3els}
{\it Example~3}. Reconstructed positions and shapes of the electrodes in Example~3; see the right-hand columns of Figures~\ref{fig:test3a} and \ref{fig:test3b}.
}
\end{figure}

\section{Conclusion}
\label{sec:conclusion}
We have demonstrated --- both numerically and theoretically --- that one can recover from mismodeling of the object shape in two-dimensional EIT by allowing electrode movement in an output least squares reconstruction algorithm. Although the same conclusion does not apply to three spatial dimensions, in simple cylindrical settings estimating the positions and shapes of the electrodes as a part of the reconstruction algorithm seems to alleviate the artifacts caused by an inaccurate model for the object boundary.

We have also tested the introduced algorithm in a setting where the reconstruction domain $D$ is a ball and the target domain $\Omega$ is a slightly distorted ball. The corresponding numerical results are not presented here, but they are in line with those in the second test of Example~3: including the estimation of the electrode shapes and positions in an output least squares algorithm does not significantly improve conductivity reconstructions in inherently three-dimensional settings.

\section*{Acknowledgments}
We would like to thank Professor Jari Kaipio's research group at the University of Eastern Finland (Kuopio) for granting us access to their EIT devices and Professor Antti Hannukainen for letting us use his finite element solver.

\appendix
\section{Explicit formulas for shape derivatives}\label{sec:App}

\subsection{Disk}

Let $ D$ be the two-dimensional origin-centered disk of radius $r > 0$, meaning that each $\tilde{E}_m$ is an open arc segment. The boundary circle is parametrized by $\tilde{\gamma}(\theta) = r[\cos\theta, \sin\theta]^\T$, $\theta \in [0,2\pi)$. We choose the electrode shape variables to be the central angles and angular half-widths/radii of the electrodes,
\begin{equation}
\label{eq:e1}
\tilde{e} \, = \, [\theta_1,\ldots,\theta_M,\alpha_1,\ldots \alpha_M]^\T \in \R^{2M}.
\end{equation}
In particular, the end points, i.e., the `boundaries', of the $m$th electrode are given as $\{x_m^-, x_m^+\} = \{\tilde{\gamma}(\theta_m - \alpha_m), \tilde{\gamma}(\theta_m + \alpha_m)\}$. A straightforward calculation gives 
\begin{equation}\label{eq:disk}
\nu_{\partial\tilde{E}}(x_m^{\pm}) \cdot \frac{\partial x_m^\pm}{\partial \theta_m} \, = \, \pm r, \qquad \nu_{\partial\tilde{E}}(x_m^{\pm}) \cdot \frac{\partial x_m^\pm}{\partial \alpha_m} \, = \, r.
\end{equation}
Since $\partial \tilde{E}_m$ consists of two points, the integrals in \eqref{eq:sampling} reduce to two-point evaluations involving \eqref{eq:disk}.

\subsection{Right circular cylinder with ellipsoidal electrodes}

Let $D$ be a right circular cylinder with radius $r > 0$ and height $h > 0$. We assume that $\partial\tilde{E}_m$, $m=1, \dots, M$, is an ellipse attached (without stretching) to the lateral boundary of $D$ so that one of the two semiaxes is parallel to the axis of $D$. By a tedious but straightforward calculation we find a parametrization $\tilde{\gamma}_m\colon [0,2\pi) \mapsto \partial\tilde{E}_m$, 
\[
\tilde{\gamma}_m(\xi) \, = \ r\sin\bigg(\frac{\ell_m}{r}\cos\xi\bigg)
\hspace{-3pt}
\begin{bmatrix}
-\sin\theta_m \\ \cos\theta_m \\ 0
\end{bmatrix}
\hspace{-3pt} 
+ r\cos\bigg( \frac{\ell_m}{r} \cos\xi \bigg)
\hspace{-3pt}
\begin{bmatrix}
\cos\theta_m \\ \sin\theta_m \\ 0
\end{bmatrix}
\hspace{-3pt}
+ (k_m\sin\xi + \zeta_m) 
\hspace{-3pt}
\begin{bmatrix}
0 \\ 0 \\ 1
\end{bmatrix}
\]
where $\ell_m, k_m > 0$ are the semiaxes of the $m$th electrode ellipse, and the center of mass of $\tilde{E}_m$ projected onto $\partial D$ is given by $[r\cos\theta_m, r\sin\theta_m, \zeta_m]^\T$, $\theta_m\in [0,2\pi)$, $\zeta_m\in(0,h)$. The relevant shape parameter vector is 
\begin{equation}\label{eq:e2}
\tilde{e} \, = \, [\theta_1,\ldots,\theta_M,\zeta_1,\ldots,\zeta_M,\ell_1,\ldots,\ell_M,k_1,\ldots,k_M]^\T \in \R^{4M}.
\end{equation}
After some basic calculations, we end up with
\begin{equation}\label{eq:cylinder}
%\left| \tilde{\gamma}'_m (\xi) \right| \bigg(\nu_{\partial\tilde{E}} \cdot \frac{\partial \tilde{\gamma}_m}{\partial \omega}\bigg)\Bigg|_{\tilde{\gamma}_m(\xi)} = \, 
\left| \tilde{\gamma}'_m (\xi) \right| \left(\nu_{\partial\tilde{E}} \big( \tilde{\gamma}_m (\xi) \big) \cdot \frac{\partial \tilde{\gamma}_m}{\partial \omega} (\xi) \right) = \,
\begin{cases}
r k_m \cos \xi, & \omega = \theta_m,\\
\ell_m \sin \xi, & \omega = \zeta_m,\\
k_m\cos^2\xi, & \omega = \ell_m,\\
\ell_m\sin^2\xi, & \omega = k_m,
\end{cases} 
\end{equation}
which can be used to evaluate the curve integrals in \eqref{eq:sampling}. 

% Bibliography using bibtex %%%%%%%%%%%%%%%%%%%
\bibliographystyle{acm}
\bibliography{cmeem-refs}

\end{document}